\theoremstyle{plain}
\newtheorem{thm}{Theorem}
\theoremstyle{plain}
\newtheorem{lem}[thm]{Lemma}
   \newenvironment{proof}[1][\proofname]{\par
     \normalfont\topsep6\p@\@plus6\p@\relax
     \trivlist
     \itemindent\parindent
     \item[\hskip\labelsep
           \scshape
       #1]\ignorespaces
   }{%
     \endtrivlist\@endpefalse
   }
   \providecommand{\proofname}{Proof}
\theoremstyle{plain}
\newtheorem{prop}[thm]{Proposition}
\theoremstyle{plain}
\newtheorem{cor}[thm]{Corollary}
\journal{Arxiv}
\begin{document}
\global\long\def\H{\ensuremath{\mathrm{H}}}

\global\long\def\res{\ensuremath{\mathrm{res}}}

\global\long\def\ind{\ensuremath{\mathrm{ind}}}

\global\long\def\lcm{\ensuremath{\mathrm{lcm}}}

\global\long\def\sb{\ensuremath{\mathrm{SB}}}

\global\long\def\gl{\ensuremath{\mathrm{GL}}}

\global\long\def\spec{\ensuremath{\mathrm{Spec}}}

\global\long\def\mat{\ensuremath{\mathrm{Mat}}}

\global\long\def\br{\ensuremath{\mathrm{Br}}}

\global\long\def\P{\ensuremath{\mathbb{P}}}

\global\long\def\A{\ensuremath{\mathbb{A}}}

\global\long\def\Z{\ensuremath{\mathbb{Z}}}

\global\long\def\chara{\ensuremath{\mathrm{char}}}

\global\long\def\ram{\ensuremath{\mathrm{ram}}}

\title{Indecomposable and Noncrossed Product Division Algebras over Curves
over Complete Discrete Valuation Rings }

\author[rvt]{Feng Chen}

\ead{fchen@emory.edu}

\address{Dept. of Math \& CS, Emory University, Atlanta GA 30322}
\begin{abstract}
Let $T$ be a complete discrete valuation ring and $\hat{X}$ a smooth
projective curve over $S=\spec(T)$ with closed fibre $X$. Denote
by $F$ the function field of $\hat{X}$ and by $\hat{F}$ the completion
of $F$ with respect to the discrete valuation defined by $X$, the
closed fibre. In this paper, we construct indecomposable and noncrossed
product division algebras over $F$. This is done by defining an index
preserving group homomorphism $s:\br(\hat{F})'\to\br(F)'$, and using
it to lift indecomposable and noncrossed product division algebras
over $\hat{F}$. 
\end{abstract}
\maketitle

\section{Introduction}

Let $\hat{X}$ be a smooth projective curve over $S=\spec(T)$, where
$T$ is a complete discrete valuation ring with uniformizer $t$.
Let $F=K(\hat{X})$ be the function field, and let $\hat{F}=\hat{K}(\hat{X})$
be the completion with respect to the discrete valuation defined by
the closed fibre $X$. We define an index-preserving homomorphism

\[
\br(\hat{F})'\to\br(F)'\]
 that splits the restriction map $\res:\br(F)'\to\br(\hat{F})'$.
Here $\br(-)$ denotes the Brauer group of $-$ and the {}``prime''
denotes the union of the $n$-torsion part of $\br(F)$, where $n$
is prime to the characteristic of $k$, the residue field of $T$.
Using the method of \citet{brussel1995noncrossed} and \citet{brussel1996decomposability},
we can construct indecomposable and noncrossed product division algebras
over $\hat{F}$, and lift these constructions to $F$ using our homomorphism,
generalizing the constructions in \citet{brussel2009indecomposable},
where indecomposable and noncrossed product division algebras over
function fields of $p$-adic curves are constructed. 

Recall that if $K$ is a field, a $K$-\emph{division algebra} $D$
is a division ring that is finite-dimensional and central over $K$.
The\emph{ period }or \emph{exponent }of $D$ is the order of the class
$[D]$ in $\br(K)$, and the \emph{index }of $D$ is the square root
of $D$'s $K$-dimension. A \emph{noncrossed product }is a $K$-division
algebra whose structure is not given by a Galois 2-cocycle. Noncrossed
products were first constructed by \citet{amitsur1972oncentral},
settling a longstanding open problem. Since then there have been several
other constructions, including \citet{saltman1978noncrossed}, \citet{jacob1986anew},
\citet{brussel1995noncrossed}. 

A $K$-division algebra is $ $ \emph{indecomposable }if it cannot
be expressed as the tensor product of two nontrivial $K$-division
algebras. It is easy to see that all division algebras of period not
a prime power are decomposable, so the problem of producing an indecomposable
division algebra is only interesting when the period and index are
unequal prime powers. Therefore we will only consider division algebras
of prime power period and index in this paper. Then it is not hard
to see that all division algebras of equal (prime power) period and
index are trivially indecomposable. Albert constructed decomposable
division algebras in the 1930's, but indecomposable division algebras
of unequal (2-power) period and index did not appear until \citet{saltman1979indecomposable}and
\citet{amitsur1979division}. Since then there have been several constructions,
including \citet{tignol1987algvcbres}, \citet{jacob1990division},
\citet{jacob1991indecomposible}, \citet{schofield1992theindex},
\citet{brussel1996decomposability} and \citet{mckinnie2008indecomposable}.

It is the author's pleasure to thank Prof. Brussel, his thesis adviser.
The author is greatly indebted to him for his patience and suggestions
during the preparation of the paper. The author would also like to
thank Prof. Suresh and Prof. Parimala for many instructive discussions
and their most valuable comments and critiques. Finally the author
thanks Prof. Harbater for reading a first draft of the paper and his
valuable suggestions and comments to improve the writing.

\section{Patching over Fields \label{sec:Patching-Over-Fields}}

Our construction is based on the method of \emph{patching over fields}
introduced in \citet{harbater2007patching}. In this section, we will
recall this method. Throughout this section, $T$ will be a complete
discrete valuation ring with uniformizer $t$, fraction field $K$
and residue field $k$. Let $\hat{X}$ be a smooth projective $T$-curve
with function field $F$ such that the reduced irreducible components
of its closed fibre $X$ is regular. (Given $F$, such an $\hat{X}$
always exists by resolution of singularities; cf. \citet{abhyankar1969resolution}
or \citet{lipman1975introduction}). Let $f:\hat{X}\to\P_{T}^{1}$
be a finite morphism such that the inverse image $S$ of $\infty\in\P_{k}^{1}$
contains all the points of $X$ at which distinct irreducible components
meet. (Such a morphism exists by \citet[Proposition 6.6]{harbater2007patching}).
We will call $(\hat{X},S)$ a \emph{regular} $T$-\emph{model} of
$F$. 

We follow \citet[Section 6]{harbater2007patching} to introduce the
notation. Given an irreducible component $X_{0}$ of $X$ with generic
point $\eta$, consider the local ring of $\hat{X}$ at $\eta$. For
a (possibly empty) proper subset $U$ of $X_{0}$, we let $R_{U}$
denote the subring of this local ring consisting of rational functions
that are regular at each point of $U$. In particular, $R_{\phi}$
is the local ring of $\hat{X}$ at the generic point of the component
$X_{0}$. The $t$-adic completion of $R_{U}$ is denoted by $\hat{R}_{U}$.
If $P$ is a closed point of $X$, we write $R_{P}$ for the local
ring of $\hat{X}$ at $P$, and $\hat{R}_{P}$ for its completion
at its maximal ideal. A height 1 prime ideal $\mathfrak{p}$ that
contains $t$ determines a \emph{branch} of $X$ at $P$, i.e., an
irreducible component of the pullback of $X$ to $\spec(\hat{R}_{P})$.
Similarly the contraction of $\mathfrak{p}$ to the local ring of
$\hat{X}$ at $P$ determines an irreducible component $X_{0}$ of
$X$, and we say that $\mathfrak{p}$ \emph{lies on }$X_{0}$. Note
that a branch $\mathfrak{p}$ uniquely determines a closed point $P$
and an irreducible component $X_{0}$. In general, there can be several
branches $\mathfrak{p}$ on $X_{0}$ at a point $P$; but if $X_{0}$
is smooth at $P$ then there is a unique branch $\mathfrak{p}$ on
$X_{0}$ at $ $$P$. We write $\hat{R}_{\mathfrak{p}}$ for the completion
of the localization of $\hat{R}_{P}$ at $\mathfrak{p}$; thus $\hat{R}_{P}$
is contained in $\hat{R}_{\mathfrak{p}},$ which is a complete discrete
valuation ring. 

Since $\hat{X}$ is normal, the local ring $R_{P}$ is integrally
closed and hence unibranched; and since $T$ is a complete discrete
valuation ring, $R_{P}$ is excellent and hence $\hat{R}_{P}$ is
a domain (cf. \citet[Scholie 7.8.3(ii,iii,vii)]{grothendieck1961elements}).
For nonempty $U$ as above and $Q\in U$, $\hat{R}_{U}/t^{n}\hat{R}_{U}\to\hat{R}_{Q}/t^{n}\hat{R}_{Q}$
is injective for all $n$ and hence $\hat{R}_{U}\to\hat{R}_{Q}$ is
also injective. Thus $\hat{R}_{U}$ is also a domain. Note that the
same is true if $U$ is empty. The fraction field s of the domains
$\hat{R}_{U},\hat{R}_{P}$ and $\hat{R}_{\mathfrak{p}}$ will be denoted
by $F_{U},F_{P}$and $F_{\mathfrak{p}}$. 

If $\mathfrak{p}$ is a branch at $P$ lying on the closure of $U\subset X_{0}$,
then there are natural inclusions of $\hat{R}_{P}$ and $\hat{R}_{U}$
into $\hat{R}_{\mathfrak{p}}$, and hence of $F_{P}$ and $F_{U}$
into $F_{\mathfrak{p}}$. The inclusion of $\hat{R}_{P}$ was observed
above; for $\hat{R}_{U}$, note that the localization of $R_{U}$
and of $R_{p}$ at the generic point of $X_{0}$ are the same; and
this localization is naturally contained in the $t$-adically complete
ring $\hat{R}_{\mathfrak{p}}$. Thus so is $R_{U}$ and hence its
$t$-adic completion $\hat{R}_{U}$. 

In the above context, assume $f:\hat{X}\to\P_{T}^{1}$ is a finite
morphism such that $\mathfrak{P}=f^{-1}(\infty)$ contains all points
at which distinct irreducible components of the closed fibre $X\subset\hat{X}$
meet (Such an $f$ always exists by \citet[Proposition 6.6]{harbater2007patching}).
We let $\mathfrak{U}$ be the collection of irreducible components
$U$ of $f^{-1}(\A_{k}^{1})$, and let $\mathfrak{B}$ be the collection
of all branches $\mathfrak{p}$ at all points of $\mathfrak{P}$. 

The inclusions of $\hat{R}_{U}$ and of $\hat{R}_{Q}$ into $\hat{R}_{\mathfrak{p}}$,
for $\mathfrak{p}=(U,Q)$, induce inclusions of the corresponding
fraction fields $F_{U}$ and $F_{Q}$ into the fraction field $F_{\mathfrak{p}}$
of $\hat{R}_{\mathfrak{p}}$. Let $I$ be the index set consisting
of all $U,Q,\mathfrak{p}$ described above. Via the above inclusions,
the collection of all $F_{\xi}$, for $\xi\in I$, then forms an inverse
system with respect to the ordering given by setting $U\succ\mathfrak{p}$
and $Q\succ\mathfrak{p}$ if $\mathfrak{p}=(U,Q)$. 

Under the above hypotheses, suppose that for every field extension
$L$ of $F$, we are given a category $\mathfrak{A}(L)$ of algebraic
structures over $L$(i.e. finite dimensional $L$-vector spaces with
additional structure, e.g. associative $L$-algebras), along with
base-change functors $\mathfrak{A}(L)\to\mathfrak{A}(L')$ when $L\subseteq L'$.
An $\mathfrak{A}$-\emph{patching problem }for $(\hat{X,}S)$ consists
of an object $V_{\xi}$ in $\mathfrak{A}(F_{\xi})$ for each $\xi\in I$,
together with isomorphisms $\phi_{U,\mathfrak{p}}:V_{U}\otimes_{F_{U}}F_{\mathfrak{p}}\to V_{\mathfrak{p}}$
and $\phi_{Q,\mathfrak{p}}:V_{Q}\otimes_{F_{Q}}F_{\mathfrak{p}}\to V_{\mathfrak{p}}$
in $\mathfrak{A}(F_{\mathfrak{p}})$. These patching problems form
a category, denoted by $\mathrm{PP}_{\mathfrak{A}}(\hat{X},S)$, and
there is a base change functor $\mathfrak{A}(F)\to\mathrm{PP}_{\mathfrak{A}}(\hat{X},S)$. 

If an object $V\in\mathfrak{A}(F)$ induces a given patching problem
up to isomorphism, we will say that $V$ is a \emph{solution} to that
patching problem, or that it is \emph{obtained by patching }the objects
$V_{\xi}$. We similarly speak of obtaining a morphism over $F$ by
patching morphisms in $\mathrm{PP}_{\mathfrak{A}}(\hat{X},S)$. The
next result is given by \citet[Theorem 7.2]{harbater2007patching}. 
\begin{thm}
\label{thm:patching-br-gp}Let $T$ be a complete discrete valuation
ring. Let $\hat{X}$ be a smooth connected projective $T$-curve with
closed fibre $X$. Let $U_{1},U_{2}\subseteq X$, let $U_{0}=U_{1}\cap U_{2}$,
and let $F_{i}:=F_{U_{i}}(i=0,1,2)$. Let $U=U_{1}\cup U_{2}$ and
form the fibre product of groups $\br(F_{1})\times_{\br(F_{0})}\br(F_{2})$
with respect to the maps $\br(F_{i})\to\br(F_{0})$ induced by $F_{i}\hookrightarrow F_{0}$.
Then the base change map $\beta:\br(F_{_{U}})\to\br(F_{1})\times_{\br(F_{0})}\br(F_{2})$
is a group isomorphism. 
\end{thm}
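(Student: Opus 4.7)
My plan is to deduce the theorem from a general Mayer--Vietoris patching equivalence for finite-dimensional $F_U$-algebras, and then translate that equivalence into the language of Brauer groups. I expect the argument to proceed in three stages: first patching for vector spaces, then for associative algebras with unit, and finally the Brauer group statement itself.

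First, I would establish the patching equivalence for the category of finite-dimensional $F_U$-vector spaces: the base change functor $V \mapsto (V \otimes_{F_U} F_1,\; V \otimes_{F_U} F_2,\; \mathrm{id})$ is an equivalence onto the category of triples $(V_1, V_2, \phi)$, where each $V_i$ is a finite-dimensional $F_i$-vector space and $\phi : V_1 \otimes_{F_1} F_0 \xrightarrow{\sim} V_2 \otimes_{F_2} F_0$ is an $F_0$-linear isomorphism. The quasi-inverse sends such a triple to the fibre product $V_1 \times_{V_1 \otimes_{F_1} F_0} V_2$, viewed as an $F_U$-vector space via the inclusion $F_U \hookrightarrow F_1 \times F_2$. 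The heart of this step is a matrix factorization lemma: every $M \in \gl_n(F_0)$ can be written as $M = M_1 M_2^{-1}$ with $M_i \in \gl_n(F_i)$. This is proved by successive $t$-adic approximation, using $t$-adic completeness of $\hat{R}_{U_1}$ and $\hat{R}_{U_2}$ together with the fact that $\hat{R}_{U_0}$ is recovered from the $\hat{R}_{U_i}$ by localizing at the generic points of $U_0 = U_1 \cap U_2$.

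Second, because a finite-dimensional associative $F_U$-algebra with unit is the same data as a vector space $V$ together with morphisms $m : V \otimes V \to V$ and $u : F_U \to V$ satisfying the usual identities, and because both the morphisms and the identities are themselves patched by the vector-space equivalence, the result upgrades formally to an equivalence for finite-dimensional $F_U$-algebras. To deduce surjectivity of $\beta$, take $(\alpha_1, \alpha_2)$ in $\br(F_1) \times_{\br(F_0)} \br(F_2)$ and pick representative central simple algebras $A_i / F_i$; after tensoring each $A_i$ with an appropriate matrix algebra over $F_i$ we may arrange that $A_1 \otimes_{F_1} F_0 \cong A_2 \otimes_{F_2} F_0$ as $F_0$-algebras, and the patching equivalence then produces an $F_U$-algebra $A$ whose class maps to $(\alpha_1, \alpha_2)$. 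Centrality and simplicity of $A$ are checked by faithfully flat base change to $F_1 \times F_2$, so $A$ is indeed a CSA. For injectivity, if $\beta([A]) = 0$ then $A \otimes_{F_U} F_i \cong \mat_{n_i}(F_i)$; after adjusting by matrix algebras to equalize $n_1$ and $n_2$ and patching the essentially unique simple modules of the two factors, one obtains an $F_U$-vector space $V$ realizing $A \cong \mathrm{End}_{F_U}(V)$, so $[A] = 0$.

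The main obstacle is the matrix factorization lemma, since the remainder of the argument is essentially formal once it is in place. The factorization is obtained by bootstrapping from the mod-$t$ case via successive approximation: starting with some factorization modulo $t$ one iteratively corrects it modulo $t^k$ for increasing $k$, the inductive step requiring that the sum map $\hat{R}_{U_1} \oplus \hat{R}_{U_2} \to \hat{R}_{U_0}$ be surjective modulo each $t^k$. This surjectivity is the key geometric input and reflects the fact that $U_1 \cup U_2$ covers the subset $U$ of the closed fibre. Verifying that the patched algebra inherits the CSA property, while conceptually clean, also requires care to ensure that the equivalence is compatible with tensor products and endomorphism algebras.
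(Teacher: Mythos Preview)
The paper does not prove this theorem at all: it is quoted verbatim as \cite[Theorem~7.2]{harbater2007patching} and used as a black box throughout. So there is no ``paper's own proof'' to compare against.

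That said, your outline is essentially a faithful reconstruction of the Harbater--Hartmann argument in the cited reference. The three-stage structure (matrix factorization $\Rightarrow$ vector-space patching $\Rightarrow$ algebra patching $\Rightarrow$ Brauer group) is exactly their architecture, and you have correctly isolated the simultaneous factorization $M = M_1 M_2^{-1}$ in $\gl_n(F_0)$ as the substantive step, with the additive surjectivity $\hat{R}_{U_1} + \hat{R}_{U_2} \twoheadrightarrow \hat{R}_{U_0}/t^k \hat{R}_{U_0}$ as the geometric input driving the $t$-adic iteration. Your treatment of injectivity and surjectivity of $\beta$ at the Brauer-group level (equalizing degrees by tensoring with matrix algebras, then patching) is also the standard reduction. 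One small point worth tightening: when you say the CSA property is checked ``by faithfully flat base change to $F_1 \times F_2$,'' note that $F_1 \times F_2$ is not a field, so centrality and simplicity are really checked over each $F_i$ separately; the patching equivalence guarantees that the center of $A$ patches from the centers $F_i$ of the $A_i$, and that a nonzero two-sided ideal of $A$ would base-change to one in some $A_i$. This is a cosmetic issue, not a gap.
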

The above Theorem says that giving a Brauer class over a function
field $F$ is equivalent to giving compatible division algebras over
the patches. The nice thing about patching Brauer classes over a function
field $F$ is that we have good control of the index, which is stated
in \citet[Theorem 5.1]{harbater2009applications}. 
\begin{thm}
\label{thm:hhk-index}Under the above notation, let $A$ be a central
simple $F$-algebra. Then $\ind(A)=\lcm_{\xi\in\mathfrak{P}\cup\mathfrak{U}}(\ind(A_{F_{\xi}}))$. 
\end{thm}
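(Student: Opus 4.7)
The easy direction, $\lcm_{\xi} \ind(A_{F_\xi}) \mid \ind(A)$, is immediate: for any field extension $L/F$ and any central simple $F$-algebra $A$ one has $\ind(A_L) \mid \ind(A)$, so in particular each $\ind(A_{F_\xi})$ divides $\ind(A)$, and hence so does their lcm. The content of the theorem is the reverse divisibility.

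For the hard direction, set $n := \lcm_{\xi \in \mathfrak{P} \cup \mathfrak{U}} \ind(A_{F_\xi})$. The plan is to produce an étale $F$-algebra $E$ of dimension $n$ such that $A \otimes_F E$ is split, which forces $\ind(A) \mid n$. For each $\xi$, choose a maximal separable subfield $L_\xi$ of the underlying division algebra of $A_{F_\xi}$. Then $[L_\xi:F_\xi] = \ind(A_{F_\xi})$ divides $n$ and $L_\xi$ splits $A_{F_\xi}$. Enlarge $L_\xi$ to the étale $F_\xi$-algebra
\[
E_\xi \;:=\; L_\xi \times F_\xi^{\,n - [L_\xi:F_\xi]},
\]
of dimension exactly $n$, which still splits $A_{F_\xi}$. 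To patch the $E_\xi$ into a global étale $F$-algebra $E$ one needs, for every branch $\mathfrak{p}=(U,P) \in \mathfrak{B}$, an isomorphism $E_U \otimes_{F_U} F_{\mathfrak{p}} \cong E_P \otimes_{F_P} F_{\mathfrak{p}}$ compatible with the local splittings of $A_{F_\mathfrak{p}}$. Granting such isomorphisms, the patching framework recalled in Section \ref{sec:Patching-Over-Fields}, applied to the category of finite-dimensional separable $F$-algebras, yields an étale $E/F$ of dimension $n$ restricting to each $E_\xi$. The local trivializations $A_{F_\xi} \otimes_{F_\xi} E_\xi \cong \mathrm{End}_{F_\xi}(V_\xi)$ are then compatible on branches, so Theorem \ref{thm:patching-br-gp} forces $A \otimes_F E$ to be split globally, completing the argument.

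The crux is establishing the branch-compatibility of the $E_\xi$. Over a complete discretely valued field such as $F_\mathfrak{p}$ there is considerable freedom in choosing an étale splitting algebra of a fixed dimension $n$ for $A_{F_\mathfrak{p}}$: one may twist the trivial factors of $E_\xi$ and absorb them into the maximal separable subfield in many ways. The hope is that by a weak-approximation style argument at the finite set of branches, one can simultaneously modify the $E_U$ and $E_P$ (keeping them splitting algebras of dimension $n$) so that all restrictions to the various $F_\mathfrak{p}$ agree. Making this simultaneous adjustment precise — and checking that the adjusted algebras still split $A$ over the patches — is the principal technical obstacle; once it is handled the theorem follows by the patching machinery already in place.
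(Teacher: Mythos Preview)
The paper does not prove this theorem; it simply quotes it as \citet[Theorem 5.1]{harbater2009applications}. So the relevant comparison is with the Harbater--Hartmann--Krashen argument, and there your proposal has a genuine gap that you yourself flag but do not close: the branch-compatibility of the local splitting \'etale algebras $E_\xi$. There is no weak-approximation mechanism available here that lets you adjust $E_U$ and $E_P$ so that $E_U\otimes_{F_U}F_\mathfrak{p}\cong E_P\otimes_{F_P}F_\mathfrak{p}$ as \'etale algebras while preserving the property that each still splits $A$. Isomorphism classes of \'etale algebras over the $F_\xi$ are governed by Galois sets, not by a linear group where one can absorb discrepancies; two maximal subfields of $A_{F_\mathfrak{p}}$ need not be isomorphic, and padding by trivial factors does not help. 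As written, the proof stops exactly at the hard step.

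The actual HHK proof avoids patching splitting algebras altogether. One first proves a \emph{factorization} statement: for a connected rational linear algebraic group $G$ over $F$ (in particular $G=\gl_1(A)$), every element of $G(F_\mathfrak{p})$ factors as a product of an element of $G(F_U)$ and one of $G(F_P)$. This yields a local--global principle for $F$-varieties on which such a $G$ acts transitively. Applying it to the generalized Severi--Brauer variety $\sb_n(A)$ with $n=\lcm_\xi\ind(A_{F_\xi})$: each $\sb_n(A)(F_\xi)$ is nonempty by Proposition~\ref{pro:kmrt-sbv}, hence $\sb_n(A)(F)\ne\emptyset$, hence $\ind(A)\mid n$. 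The point is that one patches \emph{trivializations of a fixed object} (via the transitive $G$-action), where the transition data live in a rational group and can be factored, rather than trying to patch auxiliary \'etale algebras whose isomorphism types over the branches one cannot control.
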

To conclude this section, we record a variant of Hensel's Lemma from
\citet[Lemma 4.5]{harbater2009applications} that will be used over
and over again in the index computation. 
\begin{lem}
\label{lem:hhk-hensel}Let $R$ be a ring and $I$ an ideal such that
$R$ is $I$-adically complete. Let $X$ be an affine $R$-scheme
with structure morphism $\phi:X\to\spec R$. Let $n\ge0$. If $s_{n}:\spec(R/I^{n})\to X\times_{R}(R/I^{n})$
is a section of $\phi_{n}:\phi\times_{R}(R/I^{n})$ and its image
lies in the smooth locus of $\phi$, then $s_{n}$ may be extended
to a section of $\phi$. 
\end{lem}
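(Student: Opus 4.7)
The plan is to lift $s_n$ through successive infinitesimal thickenings modulo $I^{n+1}, I^{n+2}, \dots$ and then assemble the compatible family using the $I$-adic completeness of $R$. Since $X$ is affine, write $X = \spec A$; then giving a section of $\phi$ over $R$ is the same as giving an $R$-algebra homomorphism $A \to R$, and giving a section $s_m$ of $\phi_m$ is the same as giving an $R$-algebra homomorphism $\sigma_m \colon A \to R/I^m$. The problem thus becomes: construct a compatible sequence of $\sigma_m$ extending $\sigma_n$, then pass to the limit.

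For the inductive step, suppose $s_m$ has been constructed for some $m \geq n$ (with $m \geq 1$; the degenerate case $n = 0$ is vacuous), with image in the smooth locus of $\phi$. The closed immersion $\spec(R/I^m) \hookrightarrow \spec(R/I^{m+1})$ is a square-zero thickening, since its defining ideal $I^m/I^{m+1}$ satisfies $(I^m/I^{m+1})^2 = I^{2m}/I^{m+1} = 0$ in $R/I^{m+1}$ (as $2m \geq m+1$). The infinitesimal lifting property of smooth morphisms (formal smoothness) then produces a lift $s_{m+1} \colon \spec(R/I^{m+1}) \to X \times_R (R/I^{m+1})$ of $s_m$. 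Since $\spec(R/I^m)$ and $\spec(R/I^{m+1})$ have the same underlying topological space $V(I) \subset \spec R$, and the smooth locus is an \emph{open} subscheme of $X$, the image of $s_{m+1}$ automatically lies in the smooth locus, so the induction continues.

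Finally, translating back to algebra maps, the compatibility of the $s_m$ yields a compatible system of $R$-algebra homomorphisms $\sigma_m \colon A \to R/I^m$, and hence, by the universal property of the inverse limit together with the $I$-adic completeness $R = \varprojlim_m R/I^m$, a single $R$-algebra homomorphism $\sigma \colon A \to R$. This homomorphism corresponds to a section of $\phi$ whose reduction modulo $I^n$ recovers $s_n$.

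The main thing to invoke is the infinitesimal lifting step, which is exactly the content of formal smoothness (cf.\ EGA IV, 17.5); the mild subtlety is verifying that each successive lift stays in the smooth locus, but this is immediate from the open-ness of the smooth locus together with the fact that all the nilpotent thickenings involved share the same underlying topological space. Once the lifting step is in hand, assembling the compatible system via the completeness of $R$ is routine.
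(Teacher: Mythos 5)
The paper does not supply its own proof of this lemma; it simply records it as a citation from Harbater--Hartmann--Krashen (their Lemma 4.5), so there is no in-paper argument to compare yours against. Evaluated on its own terms, your proof is correct and is the standard argument for this kind of Hensel-style lifting statement: reduce to $R$-algebra homomorphisms $A \to R/I^m$ using affineness of $X$, lift one square-zero thickening at a time via formal smoothness, and assemble the compatible tower by $I$-adic completeness. Both key observations --- that $I^m/I^{m+1}$ is square-zero once $m\ge 1$, and that the lifts stay in the smooth locus because the thickenings share a topological space and the smooth locus is open --- are the right ones.

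Two small points worth tightening. First, formal smoothness is a property of a smooth morphism, and $\phi\colon X\to\spec R$ itself need not be smooth; you should apply the infinitesimal lifting criterion to the restriction $\phi|_{X^{\mathrm{sm}}}\colon X^{\mathrm{sm}}\to\spec R$, regarding $s_m$ as a map into $X^{\mathrm{sm}}$. Doing so, the lift $s_{m+1}$ lands in $X^{\mathrm{sm}}$ by construction, and your separate topological argument about openness becomes redundant (though it is not wrong). Second, you correctly flag that $n=0$ is degenerate: with $I^0=R$ the source $\spec(R/I^0)$ is empty, so $s_0$ carries no information and the conclusion (existence of a section of $\phi$) would be false in general. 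The intended reading is $n\ge 1$, and your induction is well-posed exactly in that range, since the square-zero step uses $2m\ge m+1$.
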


\section{Splitting Map\label{sec:Splitting-Map}}

Let $T$ be a complete discrete valuation ring with uniformizer $t$
and residue field $k$. By a smooth curve $\hat{X}$ over $T$, we
will mean a scheme $\hat{X}$ which is projective and smooth of relative
dimension $1$ over $\spec(T)$. In particular, $\hat{X}$ is flat
and of finite presentation over $\spec(T)$. Let $F=K(\hat{X})$ be
the function field of $\hat{X}$. Note that since $\hat{X}$ is smooth,
the closed fibre $X$ is smooth, integral, connected and of codimension
1, hence determines a discrete valuation ring on $F$. Let $\hat{F}=\hat{K}(\hat{X})$
be the completion of $F$ with respect to this discrete valuation.
Throughout the paper, $n$ will denote an integer which is prime to
the characteristic of $k$. 

We will be using the following notation for cohomology groups in the
sequel: For an integer $r$, we let 

$\mu_{n}^{r}=\begin{cases}
\mu_{n}^{\otimes r} & \textrm{for }r\ge0,\\
\hom(\mu_{n}^{\otimes-r},\mu_{n}) & \textrm{for }r<0.\end{cases}$

For a fixed integer $n$, and for any field $K$, we will let $\H^{q}(K,r)=\H^{q}(K,\mu_{n}^{\otimes r})$
and $\H^{q}(K)=\H^{q}(K,q-1)=\H^{q}(K,\mu_{n}^{\otimes q-1})$. In
particular, $\H^{2}(K)=\:_{n}\br(K)$ will be the $n$-torsion part
of the Brauer group of $K$; and $\H^{1}(K)$ will be the $n$-torsion
part of the character group of $K$. 

Adopting the above notation, in this section we will define a map
$s:\H^{2}(\hat{F)}\to\H^{2}(F)$ and show that $s$ has the following
properties:
\begin{itemize}
\item $s$ is a group homomorphism;
\item $s$ splits the restriction;
\item $s$ preserves index of Brauer classes.
\end{itemize}
Once such a map $s$ is defined, we could use it to construct indecomposable
division algebras and noncrossed product division algebras over $F$,
as in section \ref{sec:Indecomposable-and-Noncrossed-Product}.

\subsection{Construction over an Open Affine Subset\label{sub:Construction-over-Open-Affine}}

Given an element $\hat{\gamma}\in\H^{2}(\hat{F})$, we will define
a lift $\gamma_{U}$ to $F_{U}$ of $\hat{\gamma}$. Note that since
$\hat{F}$ is a complete discretely valued field with $t$ a unifomizer,
and with $k(X)$ the residue field. We have an exact Witt Sequence
as in \citet[II.7.10 and II.7.11]{garibaldi2003cohomological},

\begin{equation}
0\to\H^{2}(k(X))\to\H^{2}(\hat{F})\to\H^{1}(k(X))\to0\label{eq:witt-exact}\end{equation}
split (non-canonically) by the cup product with $(t)\in\H^{1}(k(X))$.
Hence each element $\hat{\gamma}\in\H^{2}(\hat{F})$ can be written
as a sum $\gamma_{0}+(\chi_{0},t)$, with $\gamma_{0}\in\H^{2}(k(X))$
and $\chi_{0}\in\H^{1}(k(X))$ (Note that here we are identifying
$\H^{r}(k(X))$ as a subgroup of $\H^{r}(\hat{F})$, for $r=1,2$,
as in \citet[II.7.10 and II.7.11]{garibaldi2003cohomological}). Here
we use the notation $(\chi_{0},t)$ to denote the cup product $\chi_{0}\cup(t)$,
and we will use this notation throughout the paper without further
explanation.

Let $U$ be an open affine subset of $X$ so that neither $\gamma_{0}$
nor $\chi_{0}$ ramifies at any closed point of $U$. This implies
that $\gamma_{0}\in\H^{2}(k[U])$ and $\chi_{0}\in\H^{1}(k[U])$ by
purity(cf, \citet{colliot-thtextbackslasheltextbackslash`ene1992birational}),
where $k[U]$ denotes the ring of regular functions of the affine
scheme $U$. 

By \citet{cipolla1977remarks}, there exists a canonical isomorphism
$\H^{2}(\hat{R}_{U})\to\H^{2}(k[U])$ since $\hat{R}_{U}$ is $t$-adically
complete and $k[U]\cong\hat{R}_{U}/(t)$; therefore there is a unique
lift of $\gamma_{0}$ to $\H^{2}(\hat{R}_{U})$. At the same time,
\citet[Th\'eor\`em 8.3]{grothendieck2002revtextbackslashtextasciicircumetements}
implies that there is a unique lift of $\chi_{0}$ to $\H^{1}(\hat{R}_{U})$
as well. Taking $\tilde{\gamma}_{0}$ and $\tilde{\chi}_{0}$ as the
lifts of $\gamma_{0}$ and $\chi_{0}$ to $\hat{R}_{U}$, we will
let 

\begin{equation}
\gamma_{U}=\tilde{\gamma}_{0}+(\tilde{\chi}_{0},t)\end{equation}
be the lift of $\hat{\gamma}$ to $\H^{2}(F_{U})$.

\subsection{Construction over Closed Points\label{sub:Construction-over-Closed}}

Fix an open affine subset $U$ of $X$ and let $\mathfrak{P}=X\backslash U$.
In order to apply the patching result we recalled in \ref{sec:Patching-Over-Fields},
we need to define a $\gamma_{P}$ for each $P\in\mathfrak{P}$ in
such a way that when $\mathfrak{p}=(U,P)$ is the unique branch of
$U$ at $P$, the restriction to $F_{\mathfrak{p}}$ of $\gamma_{P}$
and $\gamma_{U}$ agree with each other, i.e., $\res_{F_{\mathfrak{p}}}(\gamma_{P})=\res_{F_{\mathfrak{p}}}(\gamma_{U})$
(Recall there are field embeddings $F_{P}\hookrightarrow F_{\mathfrak{p}}$
and $F_{U}\hookrightarrow F_{\mathfrak{p}}$ for $\mathfrak{p}=(U,P),$
as in Section \ref{sec:Patching-Over-Fields}, hence there are restrictions
$\res:\H^{2}(F_{U})\to\H^{2}(F_{\mathfrak{p}})$ and $\res:\H^{2}(F_{P})\to\H^{2}(F_{\mathfrak{p}})$.
For more details on these restriction maps, see \citet{serre1979localfields}).

Note that since $\hat{X}$ is regular and the closed fibre $X$ is
smooth, the maximal ideal of the local ring $R_{P}$ is generated
by two generators, $t$ and $\pi$. So is $\hat{R}_{P}$. 

We define $\gamma_{P}$ in the following way: There is a field embedding
$F_{U}\to F_{\mathfrak{p}}$, hence a canonical restriction $\res:\H^{2}(F_{U})\to\H^{2}(F_{\mathfrak{p}})$.
Let $\gamma_{\mathfrak{p}}$ be the image of $\gamma_{U}$ under this
restriction. Observe that $F_{\mathfrak{p}}$ is a complete discretely
valued field with residue field $\kappa(\mathfrak{p})$; furthermore,
$\kappa(\mathfrak{p)}$ is also a complete discretely valued field
with residue field $\kappa(P)$. Therefore, applying \citet[II.7.10 and II.7.11]{garibaldi2003cohomological}
twice, we get the following decomposition of $\H^{2}(F_{\mathfrak{p}})$:
\begin{equation}
\H^{2}(F_{\mathfrak{p}})\cong\H^{2}(\kappa(P))\oplus\H^{1}(\kappa(P))\oplus\H^{1}(\kappa(P))\oplus\H^{0}(\kappa(P)).\end{equation}
In other words, each element $\gamma_{\mathfrak{p}}\in\H^{2}(F_{\mathfrak{p}})$
can be written as $\gamma_{\mathfrak{p}}=\gamma_{0,0}+(\chi_{1,}\pi)+(\chi_{2}+(\pi^{r}),t)$,
where $\gamma_{0,0}\in\H^{2}(\kappa(P)),\chi_{1,}\chi_{2}\in\H^{1}(\kappa(P)),r\in\H^{0}(\kappa(P))\cong\Z/n\Z$
and $(\pi^{r})$ denote the image in $\H^{1}(\kappa(P$)) of $\pi^{r}$
under the Kummer map.  Note that by our notation, $\H^{0}(\kappa(P))=\H^{0}(\kappa(P),\mu_{n}^{-1})=\Z/n\Z$. 

In order to define a lift for $\gamma_{\mathfrak{p}}$ to $F_{P}$,
we first show that all characters in $\H^{1}(\kappa(\mathfrak{p}))$
can be lifted by proving the following lemma.
\begin{lem}
\label{lem:lift-characters}Let $\chi\in\H^{1}(\kappa(\mathfrak{p}))$
be a character. Then there is a unique $\tilde{\chi}\in\H^{1}(F_{P})$
that lifts $\chi$.\end{lem}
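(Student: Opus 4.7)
The plan is to lift $\chi$ piece-by-piece via the Witt decomposition, mirroring the iterated decomposition of $\H^2(F_\mathfrak{p})$ already used in the paragraph preceding the lemma. Since $\hat{X}$ is smooth at $P$, we have $\mathfrak{m}_{\hat{R}_P} = (t, \pi)$ with $\hat{R}_P/(t) \cong \kappa(P)[[\pi]]$, so $\kappa(\mathfrak{p}) = \kappa(P)((\pi))$ is a complete discretely valued field with uniformizer $\pi$ and residue field $\kappa(P)$. By \citet[II.7.10 and II.7.11]{garibaldi2003cohomological} there is a splitting
\begin{equation*}
\H^1(\kappa(\mathfrak{p})) \cong \H^1(\kappa(P)) \oplus \H^0(\kappa(P))
\end{equation*}
given by cup product with $(\pi)$, so I can write $\chi = \chi_0 + (\pi^r)$ uniquely with $\chi_0 \in \H^1(\kappa(P))$ and $r \in \H^0(\kappa(P))$.

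Next I would lift each summand from $\kappa(P)$ to $\hat{R}_P$. Since $n$ is invertible in $\kappa(P)$ and $\hat{R}_P$ is a complete local Noetherian ring with residue field $\kappa(P)$, Grothendieck's theorem on \'etale covers (the same tool invoked for the construction over an open affine via \citet{grothendieck2002revtextbackslashtextasciicircumetements}) gives an isomorphism $\H^1(\hat{R}_P) \xrightarrow{\sim} \H^1(\kappa(P))$, yielding a unique lift $\tilde{\chi}_0 \in \H^1(\hat{R}_P)$ of $\chi_0$. Hensel's lemma lifts $n$-th roots of unity uniquely from $\kappa(P)$ to $\hat{R}_P$, providing a unique lift $\tilde{r} \in \H^0(\hat{R}_P)$ of $r$. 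The element $\pi$ is nonzero in $\hat{R}_P$, hence a unit in $F_P$, supplying a Kummer class $(\pi) \in \H^1(F_P, \mu_n)$. I would then set
\begin{equation*}
\tilde{\chi} := \tilde{\chi}_0 + \tilde{r} \cup (\pi) \in \H^1(F_P).
\end{equation*}

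To verify that $\tilde{\chi}$ lifts $\chi$, I would show that the restriction $\tilde{\chi}|_{F_\mathfrak{p}}$ lies in the unramified subgroup $\H^1(\kappa(\mathfrak{p})) \hookrightarrow \H^1(F_\mathfrak{p})$ and equals $\chi$ there. The class $\tilde{\chi}_0$ is unramified at $(t)$ because it is defined on $\hat{R}_P \subset \hat{R}_\mathfrak{p}$; under the Grothendieck identification $\H^1(\hat{R}_\mathfrak{p}) \cong \H^1(\kappa(\mathfrak{p}))$ it recovers the image of $\chi_0$ via the natural inclusion $\H^1(\kappa(P)) \hookrightarrow \H^1(\kappa(\mathfrak{p}))$. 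Since $\pi \notin (t)$ in $\hat{R}_P$, the element $\pi$ becomes a unit in $\hat{R}_\mathfrak{p}$, so $(\pi)$ is unramified at $(t)$ and its residue in $\kappa(\mathfrak{p})^{*}/n$ is the Kummer class of the uniformizer of $\kappa(\mathfrak{p})$; hence $\tilde{r} \cup (\pi)$ contributes $(\pi^r)$, and the total restriction recovers $\chi_0 + (\pi^r) = \chi$. Uniqueness then follows from the canonicity of each ingredient: the Witt splitting of $\chi$, the Grothendieck lift of $\chi_0$, and the Hensel lift of $r$ are all unique. The main obstacle I anticipate is formulating the uniqueness cleanly inside $\H^1(F_P)$, since a priori a competing lift $\tilde{\chi}'$ could differ from $\tilde{\chi}$ by a character that is unramified at $(t)$ with trivial residue; controlling this difference will require combining the Grothendieck isomorphism for $\hat{R}_\mathfrak{p}$ with information about the residue at $(\pi)$ built into the Kummer term $\tilde{r}\cup(\pi)$.
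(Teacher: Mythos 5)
Your plan follows the same skeleton as the paper's proof: split $\chi=\chi_0+(\pi^r)$ via Witt's decomposition, lift $\chi_0$ via Grothendieck's rigidity $\H^1(\hat R_P)\cong\H^1(\kappa(P))$ for the complete local ring, and then handle the residual Kummer piece $(\pi^r)$. Where you diverge is the last step: the paper treats $(\pi^r)$ field-theoretically, observing that it corresponds to the totally ramified layer $L/L_0$ of the cyclic extension determined by $\chi$ over $\kappa(\mathfrak p)$ and citing Fesenko--Vostokov, Theorem~II.3.5 to lift that extension; you instead form the explicit cup product $\tilde r\cup(\pi)\in\H^1(F_P)$ using the regular parameter $\pi\in\hat R_P$. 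Your route is more concrete and yields the same candidate lift, and the verification that its restriction to $F_\mathfrak p$ is unramified at $(t)$ and recovers $\chi_0+(\pi^r)$ is sound. One small terminological slip: by the paper's conventions $r\in\H^0(\kappa(P))=\H^0(\kappa(P),\mu_n^{-1})\cong\mathbb Z/n\mathbb Z$, so $r$ is already an integer mod $n$ and there is no Hensel lifting of roots of unity to perform; you simply cup the integer $r$ with the Kummer class $(\pi)$.

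The genuine gap is the one you flag yourself: you have produced \emph{a} lift, but you have not closed the uniqueness half of the lemma. Your proposed reduction --- that a competing lift $\tilde\chi'$ would differ from $\tilde\chi$ by a class of $\H^1(F_P)$ unramified at $(t)$ with trivial residue --- does not by itself force that difference to vanish, because the restriction $\H^1(F_P)\to\H^1(F_\mathfrak p)$ is a completion map whose injectivity is not free; a priori there may be nonzero classes on $F_P$ that die over the $(t)$-adic completion. To make the uniqueness statement meaningful and provable you must restrict to the class of lifts of the form $\eta+(\pi^r)$ with $\eta\in\H^1(\hat R_P)$ (or some equivalent normalization), at which point Grothendieck's isomorphism and Kummer theory pin everything down. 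The paper short-circuits this by bundling existence and uniqueness of the totally ramified layer into the Fesenko--Vostokov citation; without that (or an explicit normalization of what counts as a lift), the ``unique'' in the lemma's conclusion is not established by your argument.
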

\begin{proof}
Since $\kappa(\mathfrak{p})$ is a complete discretely valued field
with residue field $\kappa(P)$, we have the classical Witt's decomposition
for $\chi$, 

\[
\chi=\chi_{0}+(\pi^{r}),\]
 where $\chi_{0}\in\H^{1}(\kappa(P))$ and $r\in\H^{0}(\kappa(P$)).
Note that $\chi_{0}$ can be lifted without any difficulty by \citet[Th\'eor\`em 8.3]{grothendieck2002revtextbackslashtextasciicircumetements};
the only trouble comes from $(\pi^{r})$. 

Let $L,L_{0}/\kappa(\mathfrak{p})$ be the field extension determined
by $\chi,\chi_{0}$ respectively. Then $L_{0}$ is the maximal unramified
subextension of $\kappa(\mathfrak{p})$ inside $L$ and $L/L_{0}$
is a totally ramified extension determined by the character $(\pi^{r})$.
Now \citet[Theorem II.3.5]{fesenko2002localfields} implies that $(\pi^{r})$
can be lifted to $\H^{1}(F_{P})$ in a unique fashion as well, since
$\kappa(\mathfrak{p})$ is a complete discretely valued field. 
\end{proof}
Now we are ready to define a lift for $\hat{\gamma}$ in $\H^{2}(F_{P})$.
Again \citet{cipolla1977remarks} implies that $\H^{2}(\kappa(P))\cong\H^{2}(\hat{R}_{P})$
and Lemma \ref{lem:lift-characters} implies that $\chi_{1},\chi_{2}+(\pi^{r})$
can be lifted to $\H^{1}(\hat{R}_{P})$ uniquely. Hence each component
of $\H^{2}(F_{\mathfrak{p}})$ can be lifted to $\hat{R}_{P}$, and
thus we will set 

\begin{equation}
\gamma_{P}=\tilde{\gamma}_{0,0}+(\tilde{\chi}_{1},\pi)+(\tilde{\chi}{}_{2}+(\pi^{r}),t).\end{equation}
where $\tilde{\gamma},\tilde{\chi}_{1},\tilde{\chi}_{2}$ are the
lifts of $\gamma_{0,0,}\chi_{1},\chi_{2}$ to $\hat{R}_{P}$ (and
hence to $F_{P}$), respectively. Therefore this $\gamma_{P}$ is
a unique lift of $\gamma_{\mathfrak{p}}$ to $F_{P}$. The assignment
of $s_{P}(\gamma_{\mathfrak{p}})=s_{P}$ will yield a map $s_{P}:\H^{2}(F_{\mathfrak{p}})\to\H^{2}(F_{P})$.
It is not hard to see that $s_{P}$ is a group homomorphism, since
it is a group homomorphism on each of the components.

\subsection{The Map is Well Defined}

In this section we show that $\gamma_{U}$ and $\gamma_{P}$ that
we constructed in Section \ref{sub:Construction-over-Open-Affine}
and Section\ref{sub:Construction-over-Closed} are compatible in the
sense of patching, that is $\res_{F_{\mathfrak{p}}}(\gamma_{U})=\res_{F_{\mathfrak{p}}}(\gamma_{P})$
for each $P\in\mathfrak{P}=X\backslash U$ when $\mathfrak{p}=(U,P)$
is the unique branch of $U$ at $P$. 

We claim that the compatibility will be proved if we can show that
$s_{P}$ splits the restriction map $\res_{F_{\mathfrak{p}}}:\H^{2}(F_{P})\to\H^{2}(F_{\mathfrak{p}})$,
or equivalently, $\res_{F_{\mathfrak{p}}}\circ s_{P}$ is the identity
map. This is true because $\gamma_{P}=s_{P}(\gamma_{\mathfrak{p}})=s_{P}\circ\res_{F_{\mathfrak{p}}}(\gamma_{U})$,
hence we would have that $\res_{F_{\mathfrak{p}}}(\gamma_{P})=\res_{F_{\mathfrak{p}}}(\gamma_{U})$
if $\res_{F_{\mathfrak{p}}}\circ s_{P}$ is the identity map. So it
suffices to prove the following
\begin{prop}
$s_{P}$ as defined in \ref{sub:Construction-over-Closed} splits
the restriction $\res:\H^{2}(F_{P})\to\H^{2}(F_{\mathfrak{p}})$,
that is, $\res\circ s_{P}$ is the identity map. \end{prop}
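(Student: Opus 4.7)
The plan is to reduce to a componentwise check. Since $s_{P}$ is constructed as a sum of group homomorphisms on each summand of
\[
\H^{2}(F_{\mathfrak{p}}) \cong \H^{2}(\kappa(P)) \oplus \H^{1}(\kappa(P)) \oplus \H^{1}(\kappa(P)) \oplus \H^{0}(\kappa(P)),
\]
and $\res$ is a group homomorphism, it suffices to prove $\res \circ s_{P} = \mathrm{id}$ on each of the four summands individually. I therefore fix $\gamma_{\mathfrak{p}} = \gamma_{0,0} + (\chi_{1},\pi) + (\chi_{2},t) + ((\pi^{r}),t)$ and check that restricting $s_{P}(\gamma_{\mathfrak{p}})$ back to $F_{\mathfrak{p}}$ reproduces each summand.

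The core tool is functoriality of the lifting theorems under the inclusion $\hat{R}_{P} \hookrightarrow \hat{R}_{\mathfrak{p}}$, which sends $t$ to the uniformizer $t$ of $\hat{R}_{\mathfrak{p}}$ and sends $\pi$ to a uniformizer of the residue field $\kappa(\mathfrak{p})$. Both Cipolla's isomorphism $\H^{2}(\hat{R}) \cong \H^{2}(R/\mathfrak{m})$ and the character lift of Lemma~\ref{lem:lift-characters} are natural in the ring. Hence the $\hat{R}_{P}$-lift $\tilde{\gamma}_{0,0}$ of $\gamma_{0,0}$ restricts to the unique $\hat{R}_{\mathfrak{p}}$-lift of the image of $\gamma_{0,0}$ along the composition $\hat{R}_{P} \to \hat{R}_{\mathfrak{p}} \to \kappa(\mathfrak{p})$; in the Witt decomposition of $\H^{2}(F_{\mathfrak{p}})$ over $\kappa(\mathfrak{p})$ this sits in the unramified summand, and a further Witt decomposition of $\kappa(\mathfrak{p})$ over $\kappa(P)$ with uniformizer $\pi$ shows it contributes exactly $\gamma_{0,0}$ in the first summand and zero elsewhere. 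The same naturality argument handles the pieces $(\tilde{\chi}_{1}, \pi)$ and $(\tilde{\chi}_{2}, t)$: the character lifts commute with restriction, and since $\pi, t \in F_{P}^{*}$ map to $\pi, t \in F_{\mathfrak{p}}^{*}$, the cup products restrict term by term.

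The most delicate piece is $((\pi^{r}), t)$. Here Lemma~\ref{lem:lift-characters}, whose proof invokes \citet[Theorem~II.3.5]{fesenko2002localfields}, identifies the lift of the totally ramified character $(\pi^{r}) \in \H^{1}(\kappa(\mathfrak{p}))$ with the Kummer class of $\pi^{r}$ in $\H^{1}(F_{P})$. Restricting to $\H^{1}(F_{\mathfrak{p}})$ yields the Kummer class of $\pi^{r}$ there; since $\pi$ is a unit in $\hat{R}_{\mathfrak{p}}$, this class is unramified with respect to $t$ and lies in the $\H^{1}(\kappa(\mathfrak{p}))$ summand of $\H^{1}(F_{\mathfrak{p}})$, where it equals $(\pi^{r})$. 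Cupping with $t$ then reproduces the original summand.

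The main obstacle is the bookkeeping across the double Witt decomposition: Witt's residue-splitting sequence is applied twice (for $F_{\mathfrak{p}}/\kappa(\mathfrak{p})$ with uniformizer $t$, then for $\kappa(\mathfrak{p})/\kappa(P)$ with uniformizer $\pi$), and each lift used to define $s_{P}$ must be verified to respect both layers simultaneously. Once the naturality diagrams are in place, each summand follows from the uniqueness of the appropriate lift, and additivity assembles these componentwise identities into $\res \circ s_{P} = \mathrm{id}$.
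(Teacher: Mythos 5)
Your proposal is correct and fills in precisely the detail that the paper compresses into the phrase ``it is easily checked'': the paper's proof is the four-line formal computation $\res\circ s_P(\gamma_{\mathfrak{p}}) = \res(\tilde{\gamma}_{0,0}+(\tilde{\chi}_1,\pi)+(\tilde{\chi}_2+(\pi^r),t)) = \gamma_{0,0}+(\chi_1,\pi)+(\chi_2+(\pi^r),t) = \gamma_{\mathfrak{p}}$, and the step you justify by naturality of the Cipolla and character-lifting isomorphisms is exactly the step the paper declines to spell out. So this is the same route, written out. Two small remarks on your exposition: the inclusion $\hat{R}_P\hookrightarrow\hat{R}_{\mathfrak{p}}$ sends $\pi$ to a \emph{unit} of $\hat{R}_{\mathfrak{p}}$ whose residue is a uniformizer of $\kappa(\mathfrak{p})$ (you roughly say this, but the phrase ``sends $\pi$ to a uniformizer of the residue field'' reads awkwardly); and when you assert that the lift of $(\pi^r)\in\H^1(\kappa(\mathfrak{p}))$ provided by Lemma~\ref{lem:lift-characters} is the Kummer class of $\pi^r$ in $\H^1(F_P)$, this is right but deserves a half-sentence: that Kummer class visibly restricts to the correct class in the unramified summand of $\H^1(F_{\mathfrak{p}})$ (since $\pi$ is a unit in $\hat{R}_{\mathfrak{p}}$), and the uniqueness in the lemma then forces the identification.
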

\begin{proof}
Take an arbitrary element $\gamma_{\mathfrak{p}}\in\H^{2}(F_{\mathfrak{p}})$.
As in section \ref{sub:Construction-over-Closed}, we write $\gamma_{\mathfrak{p}}=\gamma_{0,0}+(\chi_{1,}\pi)+(\chi_{2}+(\pi^{r}),t)$.
Therefore it is easily checked that

\begin{eqnarray*}
\res\circ s_{P}(\gamma_{\mathfrak{p}}) & = & \res\circ s_{P}(\gamma_{0,0}+(\chi_{1},\pi)+(\chi_{2}+(\pi^{r}),t))\\
 & = & \res(\tilde{\gamma}_{0,0}+(\tilde{\chi}_{1},\pi)+(\tilde{\chi}_{2}+(\pi^{r}),t))\\
 & = & \gamma_{0,0}+(\chi_{1},\pi)+(\chi_{2}+(\pi^{r}),t)\\
 & = & \gamma_{\mathfrak{p}.}\end{eqnarray*}

\end{proof}
Thus $\gamma_{U},\gamma_{P}$ will patch and yield $\gamma\in\H^{2}(F)$,
by \citet[Theorem 7.2]{harbater2007patching}. But there is one more
thing we have to check before we can say we have a map $s:\H^{2}(\hat{F})\to\H^{2}(F)$:
we need to show that $\gamma$ is independent of the choice of the
open affine subset $U$ of $X$. In order to do this, we prove the
following
\begin{lem}
\label{lem:Indep-open-affine}Let $T$ be a complete discrete valuation
ring with residue field $k$; let $\hat{X}$ be a smooth projective
$T$-curve with function field $F$ and closed fibre $X$. Let $\hat{F}$
be the completion of $F$ with respect to the discrete valuation induced
by $X$, and denote by $k(X)$ the corresponding residue field. Take
an element $\hat{\gamma}=\gamma_{0}+(\chi_{0},t)\in\H^{2}(\hat{F})$,
where $\gamma_{0}\in\H^{2}(k(X))$ and $\chi_{0}\in\H^{1}(k(X))$.
Assume that $U_{1},U_{2}$ are two open affine subsets of $X$ so
that neither $\gamma_{0},\chi_{0}$ is ramified on any point of $U_{1}\cup U_{2}$.
Let $\mathfrak{P_{1}},\mathfrak{P}_{2}$ be the complements of $U_{1},U_{2}$
respectively. We construct two Brauer classes $\gamma,\gamma'\in\H^{2}(F)$
by patching as we did above, while using $U_{1}$ and $U_{2}$ as
the open affine subset in the construction, respectively. Then $\gamma,\gamma'$
denote the same Brauer class in $\H^{2}(F)$. \end{lem}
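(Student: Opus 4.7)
The plan is to reduce to the nested case $U' \subseteq U$ and then verify that the $U$-construction restricts correctly onto each patch of the $U'$-construction. Set $U_{3} := U_{1} \cap U_{2}$, which is a nonempty open affine of $X$ (since $X$ is smooth and irreducible, every nonempty proper open subset is affine) on which neither $\gamma_{0}$ nor $\chi_{0}$ ramifies. Constructing a third class $\gamma''$ from $U_{3}$ and comparing each of $\gamma$ and $\gamma'$ to $\gamma''$ reduces the lemma to the following claim: if $U' \subseteq U$ are two admissible affines, then the $U$-construction $\gamma$ and the $U'$-construction $\gamma'$ coincide in $\H^{2}(F)$.

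By Theorem \ref{thm:patching-br-gp} applied to the $(U', \mathfrak{P}')$-patching, it suffices to verify that $\gamma$ restricts to the patching data used to build $\gamma'$. On the open patch: since $U' \subseteq U$ gives a natural inclusion $\hat{R}_{U} \hookrightarrow \hat{R}_{U'}$, the class $\gamma_{U} = \tilde{\gamma}_{0} + (\tilde{\chi}_{0}, t)$ maps to a lift of $\gamma_{0} + (\chi_{0}, t)$ in $\H^{2}(\hat{R}_{U'})$; by the uniqueness of such lifts guaranteed by \citet{cipolla1977remarks} and \citet[Th\'eor\`em 8.3]{grothendieck2002revtextbackslashtextasciicircumetements}, this restriction must coincide with $\gamma_{U'}$. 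Consequently, for each $P \in \mathfrak{P} \cap \mathfrak{P}'$ the two constructions produce the same $\gamma_{\mathfrak{p}}$ at the unique branch $\mathfrak{p} = (U', P) = (U, P)$, and hence the same $\gamma_{P} = s_{P}(\gamma_{\mathfrak{p}})$.

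The delicate case is $P \in \mathfrak{P}' \setminus \mathfrak{P} = U \setminus U'$, where $P$ appears as a closed patch only for the $U'$-construction, so I must prove $\gamma_{U}|_{F_{P}} = s_{P}(\gamma_{U}|_{F_{\mathfrak{p}}})$. The essential observation is that both $\tilde{\gamma}_{0} \in \H^{2}(\hat{R}_{U})$ and $\tilde{\chi}_{0} \in \H^{1}(\hat{R}_{U})$ are already unramified at $P$ (i.e., they restrict through $\hat{R}_{P}$), so when their images in $\H^{2}(F_{\mathfrak{p}})$ are expanded via the iterated Witt decomposition the $\pi$-components vanish and the residues reduce to the evaluations $\gamma_{0}|_{\kappa(P)}$ and $\chi_{0}|_{\kappa(P)}$. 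Applying $s_{P}$ to this decomposition returns the unique lifts of these residues to $\hat{R}_{P}$, which by the uniqueness of lifts must equal $\tilde{\gamma}_{0}|_{\hat{R}_{P}}$ and $\tilde{\chi}_{0}|_{\hat{R}_{P}}$; hence $s_{P}(\gamma_{U}|_{F_{\mathfrak{p}}}) = \gamma_{U}|_{F_{P}}$. With all patches matching, Theorem \ref{thm:patching-br-gp} delivers $\gamma = \gamma'$. The principal obstacle is precisely this last bookkeeping step at the doubly complete field $F_{\mathfrak{p}}$: verifying that the iterated Witt decomposition of an unramified class has vanishing $\pi$-contributions and that uniqueness of lifts at each of the three levels ($\hat{R}_{U}$, $\hat{R}_{P}$, $\kappa(P)$) propagates to force the identification at $P$.
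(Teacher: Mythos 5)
Your proof follows the same strategy as the paper's --- reduce to the nested case $U' \subseteq U$ by passing to the intersection, then compare patches --- but you are substantially more careful precisely where the paper is cavalier, and your argument is complete whereas the paper's reads as having a gap. In the nested case the paper asserts that the compatibility $\gamma_1 = \res_{F_{U_1}}(\gamma_2)$ of the two open-patch lifts gives, ``in other words,'' the equality $\res_{F_{U_2}}(\gamma) = \res_{F_{U_2}}(\gamma')$, and then only checks agreement at the points of $\mathfrak{P}_2$. But this equality of restrictions to the coarser patch $F_{U_2}$ does not follow from lift compatibility alone: since $\gamma$ was assembled over the finer decomposition $(U_1,\mathfrak{P}_1)$, computing $\res_{F_{U_2}}(\gamma)$ requires knowing its restrictions to $F_P$ for $P \in U_2 \setminus U_1$ and showing these match $\gamma_2$ --- which is exactly the ``delicate case'' you isolate. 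Your resolution of that case is the right one: because $\tilde{\gamma}_0$ and $\tilde{\chi}_0$ already live in $\H^*(\hat{R}_U) \subseteq \H^*(\hat{R}_P)$, their images in $\H^2(F_{\mathfrak{p}})$ are unramified at both levels of the iterated Witt decomposition, so $\chi_1$ and $r$ vanish, the residues are the evaluations at $\kappa(P)$, and $s_P$ applied to these residues returns their unique lifts in $\hat{R}_P$, which by uniqueness coincide with $\tilde{\gamma}_0|_{\hat{R}_P}$ and $\tilde{\chi}_0|_{\hat{R}_P}$. Patching over the finer $(U',\mathfrak{P}')$ decomposition with all restrictions verified is the correct use of Theorem~\ref{thm:patching-br-gp}. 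In short, your proof fills in a step the paper glides over, and in doing so is the more rigorous of the two.
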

\begin{proof}
We first deal with the case where $U_{1}$ is contained in $U_{2}$.
In this case we have a field embedding $F_{U_{2}}\hookrightarrow F_{U_{1}}$.
Let $\gamma_{i}$ be the lift of $\gamma_{0}$ to $\H^{2}(F_{U_{i}})$,
we must have $\gamma_{1}=\res_{F_{U_{1}}}(\gamma_{2})$, since both
$\gamma_{1}$ and $\gamma_{2}$ are the image of $\gamma_{0}$; in
other words, $\res_{F_{U_{2}}}(\gamma)=\res_{F_{U_{2}}}(\gamma')$.
By the construction in Section \ref{sub:Construction-over-Closed},
it follows that for every $P\in\mathfrak{P}_{2}$, $\res_{F_{P}}(\gamma)=\res_{F_{P}}(\gamma')$.
Therefore it follows that $\gamma=\gamma'$, by \citet[Theorem 7.2]{harbater2007patching}.
This proves the Lemma in the case where $U_{1}$ is contained in $U_{2}$. 

In the general case, let $U_{3}$ be an open affine subset of $U_{1}\cap U_{2}$.
Clearly $\gamma_{0}$ and $\chi_{0}$ are both unramified at every
point of $U_{3}$. Let $\gamma''\in\H^{2}(F)$ be the Brauer class
constructed by patching as above, using $U_{3}$ as the open affine
subset in the construction. It follows that $\gamma''=\gamma$ and
$\gamma''=\gamma'$ since $U_{3}$ is contained in both $U_{1}$ and
$U_{2}$, by what we just proved for the case where one open affine
subset is contained in the other. Hence $\gamma=\gamma'=\gamma''\in\H^{2}(F)$,
which proves the Lemma in the general case. 
\end{proof}

\subsection{$s$ Splits the Restriction Map\label{sub:s-Splits-restriction}}

Recall the notation: let $T$ be a complete discrete valuation ring
with residue field $k$ and uniformizer $t$. Let $\hat{X}$ be a
smooth projective $T$-curve with function field $F$ and closed fibre
$X$. Let $\hat{F}$ be the completion of $F$ with respect to the
discrete valuation induced by $X$. Let $s:\H^{2}(\hat{F})\to\H^{2}(F)$
be the map defined by patching as in section \ref{sub:Construction-over-Open-Affine}
and section \ref{sub:Construction-over-Closed}. We will show that
$s$ splits the restriction map $\res:\H^{2}(F)\to\H^{2}(\hat{F}$).
Hence index of Brauer classes cannot go up under the map $s$, because
restriction can never raise index. In particular, we prove the following
Proposition.
\begin{prop}
\label{pro:s-is-section}The map $s$ is a section to the restriction
map $\res_{\hat{F}}:\H^{2}(F)\to\H^{2}(\hat{F})$. \end{prop}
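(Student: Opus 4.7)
The plan is to trace through the patching construction and verify that restricting $\gamma := s(\hat{\gamma})$ to $\hat{F}$ recovers $\hat{\gamma}$. The central observation is that $\hat{F}$ is itself built into the patching setup. Since $X$ is smooth and connected, it has a unique generic point $\eta$; the local ring $R_{\emptyset}=\mathcal{O}_{\hat{X},\eta}$ is a DVR with uniformizer $t$, whose $t$-adic completion $\hat{R}_{\emptyset}$ is a complete DVR with residue field $k(X)$ and fraction field exactly $\hat{F}$. Moreover, for any nonempty open affine $U \subseteq X$ used in constructing $s(\hat{\gamma})$, the inclusion $R_{U}\hookrightarrow R_{\emptyset}$ (obtained by localizing $R_U$ at $\eta$) induces an injection $\hat{R}_{U}\hookrightarrow\hat{R}_{\emptyset}$ on $t$-adic completions, hence an injection $F_{U}\hookrightarrow\hat{F}$ on fraction fields, compatible with $F\hookrightarrow F_{U}$ and $F\hookrightarrow\hat{F}$.

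By the patching isomorphism (Theorem \ref{thm:patching-br-gp}) and the construction of $s$, the restriction of $\gamma$ to $F_{U}$ is $\gamma_{U}=\tilde{\gamma}_{0}+(\tilde{\chi}_{0},t)\in\H^{2}(F_{U})$. Since restrictions compose along $F\hookrightarrow F_{U}\hookrightarrow\hat{F}$, it suffices to verify that $\res_{\hat{F}/F_{U}}(\gamma_{U})=\hat{\gamma}$ in $\H^{2}(\hat{F})$. For this I would use naturality. The ring map $\hat{R}_{U}\hookrightarrow\hat{R}_{\emptyset}$ reduces modulo $t$ to the inclusion $k[U]\hookrightarrow k(X)$, and both the Cipolla isomorphism and the Grothendieck character-lifting result are natural in such ring maps. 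Hence $\tilde{\gamma}_{0}\in\H^{2}(\hat{R}_{U})$ pushes forward to the unique lift of $\gamma_{0}\in\H^{2}(k(X))$ in $\H^{2}(\hat{R}_{\emptyset})$, and similarly $\tilde{\chi}_{0}\in\H^{1}(\hat{R}_{U})$ pushes forward to the unique lift of $\chi_{0}\in\H^{1}(k(X))$ in $\H^{1}(\hat{R}_{\emptyset})$. Passing further to cohomology of $\hat{F}$ and invoking naturality of the cup product then gives $\res_{\hat{F}/F_{U}}(\gamma_{U})=\gamma_{0}+(\chi_{0},t)=\hat{\gamma}$.

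The one point that deserves care is the identification of the Witt splitting used in (\ref{eq:witt-exact}) to write $\hat{\gamma}=\gamma_{0}+(\chi_{0},t)$ with the ``lift to the ring of integers, then restrict'' recipe implicit in the previous paragraph. This amounts to the standard identification of the unramified subgroup of $\H^{q}(\hat{F})$ with $\H^{q}(\hat{R}_{\emptyset})$, which via \citet{cipolla1977remarks} (for $q=2$) and \citet[Th\'eor\`em 8.3]{grothendieck2002revtextbackslashtextasciicircumetements} (for $q=1$) is further identified with $\H^{q}(k(X))$. Once this identification is spelled out, the remainder of the argument is pure functoriality of étale cohomology and of the cup product under the chain of ring maps $\hat{R}_{U}\to\hat{R}_{\emptyset}\to\hat{F}$, and presents no substantive obstacle.
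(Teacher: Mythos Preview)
Your argument is correct and rests on the same underlying mechanism as the paper's proof---functoriality of the lifting isomorphisms (Cipolla for $\H^{2}$, Grothendieck for $\H^{1}$) along the chain $\hat{R}_{U}\to\hat{R}_{\emptyset}\to\hat{F}$---but you package it more cleanly. The paper does not explicitly identify $\hat{F}$ with $F_{\emptyset}$; instead it treats the two summands of the Witt decomposition separately, handling the unramified part $\gamma_{0}$ via a commutative diagram relating $\H^{2}(\hat{R}_{U})$, $\H^{2}_{\mathrm{nr}}(\hat{F})$, $\H^{2}(F)$ and $\H^{2}(F_{U})$, and handling the cyclic part $(\chi_{0},t)$ by computing the ramification $\ram(\res_{\hat{F}}\circ s((\chi_{0},t)))$ and checking it equals $\chi_{0}$. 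Your single naturality argument subsumes both cases at once and avoids the somewhat ad hoc appeal to the ramification map; the price is that you must spell out (as you note in your last paragraph) why the inclusion $\H^{q}(k(X))\hookrightarrow\H^{q}(\hat{F})$ used in the Witt splitting agrees with the composite $\H^{q}(k(X))\cong\H^{q}(\hat{R}_{\emptyset})\to\H^{q}(\hat{F})$, which is a routine but necessary identification.
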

\begin{proof}
It suffices to show that $\res\circ s$ is the identity map on $\H^{2}(\hat{F})$.
Since $\H^{2}(\hat{F})\cong\H^{2}(k(X))\oplus\H^{1}(k(X))$, it suffices
to show that $\res_{\hat{F}}\circ s$ is the identity map on both
components; that is, given $\hat{\gamma}=\gamma_{0}+(\chi_{0},t)$
where $\gamma_{0}\in\H^{2}(k(X))$ and $\chi_{0}\in\H^{1}(k(X))$,
the Proposition will follow if we can show that $\res_{\hat{F}}\circ s(\gamma_{0})=\gamma_{0}$
and $\res_{\hat{F}}\circ s((\chi_{0},t))=(\chi_{0},t)$. 

Take an open affine subset $U$ of $X$ so that $\gamma_{0},\chi_{0}$
are both unramified on every point of $U$; that is, we have $\gamma_{0}\in\H^{2}(k[U])$
and $\chi_{0}\in\H^{1}(k[U])$. Note that we have the following commutative
diagram (For a field $E$, $\H_{\mathrm{nr}}^{2}(E)$ denotes the
unramified part of $\H_{\mathrm{nr}}^{2}(E)$, or equivalently, $\H_{\mathrm{nr}}^{2}(E)=\cap_{v}\H^{2}(E_{v})$,
where $v$ runs through all discrete valuations on $E$, and $E_{v}$
denotes the completion of $E$ at $v$. See \citet{colliot-thtextbackslasheltextbackslash`ene1992birational}
for more details on the unramified cohomology.): 

$\xymatrix{\H^{2}(k(X))\ar@{->}[r]_{f}^{\sim} & \H_{\text{nr}}^{2}(\hat{F})\ar[d]^{s}\\
\H^{2}(\hat{R}_{U})\ar@{^{(}->}[u]^{g}\ar@{_{(}->}[d]^{h} & \H^{2}(F)\ar[dl]^{\res_{F_{U}}}\\
\H^{2}(F_{U})}
$

The commutativity of the above diagram follows simply from the construction
of $ $over open affine subset we outline in Section \ref{sub:Construction-over-Open-Affine}.
Therefore $\res_{\hat{F}}$ on $s(\gamma_{0})$ is the same as $f\circ g\circ h^{-1}\circ\res_{F_{U}}$,
and thus $\res_{\hat{F}}\circ s(\gamma_{0})=f\circ g\circ h^{-1}\circ\res_{F_{U}}\circ s(\gamma_{0})=\gamma_{0}$.$ $
(Note in fact $h$ has no inverse; however we can find an inverse
image under $h$ for $\res_{F_{U}}\circ s(\gamma_{0})$, so we write
$h^{-1}$ only merely as a shorthand notation here.)

To show that $\res_{\hat{F}}\circ s((\chi_{0},t))=(\chi_{0},t)$,
it suffices to show that $\ram(\res_{\hat{F}}\circ s((\chi_{0},t)))=\chi_{0}$,
where $\ram:\H^{2}(\hat{F})\to\H^{1}($$k(X))$ denotes the ramification
map on $\H^{2}(\hat{F})$ with respect to the valuation determined
by the closed fibre $X$. Since $\chi_{0}\in\H^{1}(k[U])$, we have
$\ram(\res_{\hat{F}}\circ s((\chi_{0},t)))=\ram((\tilde{\chi}_{0},t))$
where $\tilde{\chi}_{0}$ denotes the lift of $\chi_{0}$ to $\H^{1}(\hat{R}_{U})$,
as we did in Section \ref{sub:Construction-over-Open-Affine} (Since
$\H^{1}(\hat{R}_{U})\cong\H^{1}(k[U])$, $\tilde{\chi}_{0}$ can be
viewed as as element of $\H^{1}(k[U])$, and hence element of $\H^{1}(k(X))$
via the injection $\H^{1}(k[U])\hookrightarrow\H^{1}(k(X))$, and
finally element of $\H^{1}(\hat{F})$ via the injection $\H^{1}(k(X))\hookrightarrow\H^{1}(\hat{F})$).
Therefore the image in $\H^{1}(\hat{F})$ of $\tilde{\chi}_{0}$ under
the composition of these maps is in fact $\chi_{0}$, since all these
maps are injective. Then it is easy to see that $\ram((\tilde{\chi}_{0},t))=\tilde{\chi}_{0}=\chi_{0}\in\H^{1}(k[X])$,
as desired.
\end{proof}
The following corollary is immediate: 
\begin{cor}
\label{cor:raise-index}Index of Brauer classes cannot go down under
the map $s$. \end{cor}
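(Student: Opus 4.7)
The plan is to deduce this corollary directly from Proposition \ref{pro:s-is-section} together with the standard fact that restriction of a central simple algebra to a field extension can never raise the index (indeed, $\ind(A_{L}) \mid \ind(A)$ whenever $L/K$ is a field extension and $A$ is a central simple $K$-algebra).

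First, fix $\hat\gamma \in \H^{2}(\hat{F})$ and set $\gamma = s(\hat\gamma) \in \H^{2}(F)$. By Proposition \ref{pro:s-is-section}, we have $\res_{\hat{F}}(\gamma) = \hat\gamma$. If $D$ is the underlying $F$-division algebra representing $\gamma$, then $D \otimes_{F} \hat{F}$ represents $\hat\gamma$, and its index (the index of $\hat\gamma$) divides $\ind(D) = \ind(\gamma)$. In particular $\ind(\hat\gamma) \le \ind(s(\hat\gamma))$, which is the statement of the corollary.

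The main (and only) content is the recognition that restriction cannot raise the index; once Proposition \ref{pro:s-is-section} is in hand, there is no further obstacle. It is worth noting that the reverse inequality $\ind(s(\hat\gamma)) \le \ind(\hat\gamma)$, which together with this corollary would give the full index-preservation property promised in the introduction, is \emph{not} proved here and will require the more delicate patching-based index calculation using Theorem \ref{thm:hhk-index} and Lemma \ref{lem:hhk-hensel} in the subsequent sections.
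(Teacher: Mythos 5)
Your proof is correct and matches the paper's argument exactly: apply Proposition \ref{pro:s-is-section} to get $\res_{\hat F}(s(\hat\gamma)) = \hat\gamma$, then invoke the fact that restriction cannot raise index. Your closing remark about the reverse divisibility being deferred to the patching/index-computation machinery is also accurate.
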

\begin{proof}
Take $\hat{\gamma}\in\H^{2}(\hat{F})$ and let $\gamma=s(\hat{\gamma})$.
By Proposition \ref{pro:s-is-section} we must have that $\hat{\gamma}=\res_{\hat{F}}(\gamma)$,
therefore $\ind(\hat{\gamma})|\ind(\gamma)$. This proves that $s$
can never lower index of Brauer classes.
\end{proof}

\section{$s$ Preserves Index of Brauer Classes\label{sec:Index-Computation}}

In this section, we will show that the splitting map $s$ that we
defined in section \ref{sec:Splitting-Map} has one more property
that is crucial to the construction of indecomposable and noncrossed
product division algebras over $p$-adic curves, that is, $s$ preserves
index of Brauer classes. In other words, $\ind(\hat{\gamma})=\ind(\gamma)=\ind(s(\hat{\gamma}))$.
We make the following elementary observation, which is true for Brauer
classes over an arbitrary field. 
\begin{prop}
\label{pro:ele-index}Let $k$ be an arbitrary field. Let $\gamma\in\H^{2}(k)$
be a Brauer class with the following decomposition: $\gamma=\gamma_{0}+(\chi,t)$,
where $\gamma_{0}\in\H^{2}(k)$, $\chi\in\H^{1}(k)$ and $t$ is an
arbitrary element of $k$. Then $\ind(\gamma)|\ind(\gamma_{0,l})\cdot\exp(\chi)$,
where $\gamma_{0,l}$ denotes the base extension of $\gamma_{0}$
to $l/k$, where $l$ is the field extension determined by $\chi$. \end{prop}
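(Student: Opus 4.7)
The plan is to exploit the cyclic extension $l/k$ cut out by $\chi$ to kill the cup product summand, and then descend the index bound via a standard Brauer-theoretic inequality.

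First I would observe that $l/k$ is a cyclic extension of degree equal to the order of $\chi$ in $\H^1(k)$, i.e. $[l:k]=\exp(\chi)$. By construction $l$ is the fixed field of the kernel of $\chi$, so the restriction of $\chi$ to $\H^1(l)$ is zero. Consequently the cup product $(\chi,t)$ vanishes in $\H^2(l)$, and restricting $\gamma=\gamma_0+(\chi,t)$ to $l$ gives
\[
\gamma_l=\gamma_{0,l}+(\chi,t)_l=\gamma_{0,l}\in\H^2(l).
\]
In particular $\ind(\gamma_l)=\ind(\gamma_{0,l})$.

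Next I would invoke the standard fact that for any finite field extension $l/k$ of degree $d$ and any Brauer class $\gamma\in\H^2(k)$, one has $\ind(\gamma)\mid d\cdot\ind(\gamma_l)$. (One way to see this is to pick a splitting field $L/l$ of $\gamma_l$ with $[L:l]=\ind(\gamma_l)$; then $L/k$ has degree $d\cdot\ind(\gamma_l)$ and splits $\gamma$, so $\ind(\gamma)$ divides this product.) Applying this with $d=[l:k]=\exp(\chi)$ and using the equality from the previous step yields
\[
\ind(\gamma)\,\bigm|\,[l:k]\cdot\ind(\gamma_l)=\exp(\chi)\cdot\ind(\gamma_{0,l}),
\]
which is the desired bound.

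There is essentially no obstacle here; the only subtlety is to be careful that the ``field extension determined by $\chi$'' has degree exactly $\exp(\chi)$ (not the ambient $n$), so that one gets the sharper bound $\exp(\chi)\cdot\ind(\gamma_{0,l})$ rather than $n\cdot\ind(\gamma_{0,l})$. Once this identification is in place, the proof is two lines: kill $(\chi,t)$ by going up to $l$, then apply the degree-times-index inequality to come back down to $k$.
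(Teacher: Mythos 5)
Your proof is correct and takes essentially the same route as the paper: both produce a splitting field of $\gamma$ of degree $\ind(\gamma_{0,l})\cdot\exp(\chi)$ over $k$ by first passing to $l$ and then to a minimal splitting field of $\gamma_{0,l}$. Your phrasing is in fact a bit cleaner than the paper's, which introduces a compositum $EE'$ with an auxiliary $E'/k$ splitting $\chi$ of degree $\exp(\chi)$ (but any such $E'$ is forced to equal $l$ anyway, since splitting $\chi$ means containing $l$), whereas you just restrict to $l$, observe $\gamma_l=\gamma_{0,l}$, and invoke the standard inequality $\ind(\gamma)\mid[l:k]\cdot\ind(\gamma_l)$.
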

\begin{proof}
Let $E/l$ be a minimal extension that splits $\gamma_{0,l}$. Then
$[E:l]=\ind(\gamma_{0,l})$. Also there is some $E'/k$ with $[E':k]=\exp(\chi)$
which splits $\chi$ and hence $(\chi,t)$; therefore $EE'$ will
split $\gamma$, furthermore it is not hard to see that $[EE':k]|\ind(\gamma_{0,l})\cdot\exp(\chi)$
and hence $\ind(\gamma)|\ind(\gamma_{0,l})\cdot\exp(\chi)$.
\end{proof}
We will apply \citet[Theorem 5.1]{harbater2009applications}, which
states that $\ind(\gamma)=\lcm(\ind(\gamma_{U}),\ind(\gamma_{P}))$
for each $P\in\mathfrak{P}$. Since we already showed that $s$ can
never lower index of Brauer classes as in section \ref{sub:s-Splits-restriction},
we will be done if we could show that $\ind(\gamma)|\ind(\hat{\gamma});$
therefore it suffices to show that $\ind(\gamma_{U})|\ind(\hat{\gamma})$
and $\ind(\gamma_{P})|\ind(\hat{\gamma})$ for each $P\in\mathfrak{P}$,
respectively. We will deal with them in order. 

We start by recalling the notion of \emph{Azumaya algebras} and their
generalized Severi-Brauer varieties. The notion of a central simple
algebra over a field can be generalized to the notion of an \emph{Azumaya
algebra }over a domain $R$ (cf. \citet[Chapter 2]{saltman1999lectures},
or \citet[Part I, Section 1]{grothendieck1968legroupe}). The degree
of an Azumaya algebra $A$ over $R$ is the degree of $A\otimes_{R}F$
as a central simple algebra over the fraction field $F$ over $R$.
The \emph{Brauer group }of a domain $R$ is defined as the set of
equivalence classes of Azumaya algebras with the analogous operations,
where one replaces the vector spaces $V_{i}$ with projective modules
in the definition of Brauer equivalences. If $A$ is an Azumaya algebra
of degree $n$ over a domain $R$, and $1\le i<n$, there is a functorially
associated smooth projective $R$-scheme $\sb_{i}(A)$, called the
$i$-th \emph{generalized Severi-Brauer variety of $A$} (cf. \citet[p. 334]{bergh1988thebrauerseveri}).
For each $R$-algebra $S$, the $S$-points of $\sb_{i}(A)$ are in
bijection with the right ideals of $A_{S}=A\otimes_{R}S$ that are
direct summands of the $S$-module $A_{S}$ having dimension (i.e.
$S$-rank) $ni$. If $R$ is a field $F$, so that $A$ is a central
simple $F$-algebra, and if $E/F$ is a field extension, then $\sb_{i}(A)(E)\ne\phi$
if and only if $\ind(A_{E})$ divides $i$ (cf. \citet[Proposition 1.17]{knus1998thebook}).
Here $A_{E}\cong\mat_{m}(D_{E})$ for some $E$-division algebra $D_{E}$
and some $m\ge1$, and the right ideals of $E$-dimension $ni$ are
in natural bijection with the subspaces of $D_{E}^{m}$ of $D_{E}$-dimension
$i/\ind(A_{E})$ (cf. \citet[Proposition 1.12, Definition 1.9]{knus1998thebook}).
Thus the $F$-linear algebraic group $\gl_{1}(A)=\gl_{m}(D_{F})$
acts transitively on the points of the $F$-scheme $\sb_{i}(A)$.
We record \citet[Proposition 1.17]{knus1998thebook} here since we
will be using it over and over again in the sequel.
\begin{prop}
\label{pro:kmrt-sbv} Let $A$ be a central simple algebra over a
field $F$. The Severi-Brauer variety $\sb_{r}(A)$ has a rational
point over an extension $K/F$ if and only if the index $\ind(A_{K})$
divides $r$. In particular, $\sb(A)$ has a rational point over $K$
if and only if $K$ splits $A$. 
\end{prop}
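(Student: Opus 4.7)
The plan is to exploit the functorial description of the generalized Severi--Brauer variety recalled in the paragraph preceding the statement. First I would observe that for any field extension $K/F$, the set $\sb_r(A)(K)$ is in bijection with the right ideals of $A_K$ that are $K$-direct summands of $A_K$ of $K$-dimension $nr$, where $n=\deg_F A$; over a field the direct-summand hypothesis is automatic, so the question reduces to producing a right ideal of the prescribed dimension inside $A_K$.

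Next I would apply Wedderburn's theorem to write $A_K \cong \mat_m(D_K)$, where $D_K$ is a central division $K$-algebra of degree $d := \ind(A_K)$, so that $n = md$. As was already recalled just before the statement, Morita equivalence identifies the right ideals of $\mat_m(D_K)$ that are $K$-direct summands with the $D_K$-subspaces of $D_K^m$: a subspace of $D_K$-dimension $j$ corresponds to a right ideal of $K$-dimension $j\cdot md^2$. Equating this with the target dimension $nr = mdr$ forces $jd = r$, so a right ideal of the required dimension exists in $A_K$ precisely when $d \mid r$ and $0 \le r/d \le m$.

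The upper bound $r/d \le m$ follows from the standing assumption $1 \le r < n = md$, so one concludes $\sb_r(A)(K) \neq \emptyset$ if and only if $\ind(A_K)\mid r$. Specializing to $r=1$ yields the second assertion: $\sb(A)(K)\neq\emptyset$ if and only if $\ind(A_K)=1$, i.e.\ if and only if $K$ splits $A$. The hard part, if any, is purely bookkeeping --- verifying the Morita dictionary between right ideals of $\mat_m(D_K)$ and $D_K$-subspaces of $D_K^m$, together with the dimension count --- but since the Proposition is classical and is being invoked as a black box in the sequel, I would simply cite \citet[Propositions 1.12 and 1.17]{knus1998thebook} for the standard proof rather than redo the Morita calculation in detail.
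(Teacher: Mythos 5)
Your argument is correct and matches the approach implicit in the paper: the paper simply records this as \citet[Proposition~1.17]{knus1998thebook} after sketching exactly the Morita dictionary and dimension count you flesh out in the preceding paragraph, and you end by citing the same reference. Nothing is missing --- the ``direct summand'' hypothesis is indeed automatic over a field, and the dimension bookkeeping $jd = r$ together with the bound $r < n = md$ gives precisely the divisibility condition $\ind(A_K)\mid r$.
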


\subsection{Index Computation Over Affine Open Set}

We compute $\ind(\gamma_{U})$ in this section; in particular, we
show that $\ind(\gamma_{U})|\ind(\hat{\gamma})$. Thanks to Lemma
\ref{lem:Indep-open-affine}, it suffices to show that there exists
an open affine subset $V\subset X$ so that $\ind(\gamma_{V})|\ind(\hat{\gamma})$
since we could replace $U$ by $V$ if necessary in the construction
we outlined in section \ref{sub:Construction-over-Open-Affine} and
this would not change $\gamma\in\H^{2}(K(\hat{X})$ by Lemma \ref{lem:Indep-open-affine}.
Therefore we will prove the following proposition, which shows that
there exists such an open affine subset $V$. 
\begin{prop}
Let $T$ be a complete discrete valuation ring. Let $\hat{X}$ be
a smooth projective $T$-curve with closed fibre $X$. Let $F$ be
the function field of $\hat{X}$ and $\hat{F}$ the completion of
$F$ with respect to $ $the discrete valuation determined by $X$.
Then for every $\hat{\gamma}\in\H^{2}(\hat{F})$, there exists an
affine open subset $V\subset X$ such that $\ind(\gamma_{V})|\ind(\hat{\gamma})$,
where $\gamma_{V}$ is the lift of $\hat{\gamma}$ to $F_{V}$ as
defined in section \ref{sub:Construction-over-Open-Affine}. \end{prop}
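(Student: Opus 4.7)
The plan is to exhibit a central simple algebra of degree $n := \ind(\hat{\gamma})$ over $F_V$ representing $\gamma_V$, which will immediately give $\ind(\gamma_V) \mid n$. Writing $\hat{\gamma} = \gamma_0 + (\chi_0, t)$ via the Witt sequence (\ref{eq:witt-exact}), let $E/k(X)$ be the cyclic extension determined by $\chi_0$, with $e = [E:k(X)] = \exp(\chi_0)$, and set $d = \ind((\gamma_0)_E)$. The classical index formula for Brauer classes over a complete discretely valued field (a consequence of the Witt decomposition) gives $\ind(\hat{\gamma}) = de$, so it suffices to construct a degree-$de$ central simple $F_V$-algebra in the class of $\gamma_V$.

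First I would choose an open affine $V_0 \subset X$ on which both $\gamma_0$ and $\chi_0$ are unramified. The character $\chi_0 \in \H^1(k[V_0])$ corresponds to a cyclic \'etale cover $\pi \colon W_0 \to V_0$ of degree $e$, and after further shrinking $V_0$ we may assume $W_0$ is integral with function field $E$. The $E$-division algebra $D$ of degree $d$ representing $(\gamma_0)_E$ spreads out, via the denominators of its structure constants in some $E$-basis, to an Azumaya algebra of degree $d$ on a dense open subscheme of $W_0$. Using the finiteness of $\pi$, I can then shrink $V_0$ to an open affine $V$ such that $W := \pi^{-1}(V)$ lies entirely in that locus, yielding an Azumaya algebra $A_0$ of degree exactly $d$ over $k[W]$ whose Brauer class equals $(\gamma_0)_{k[W]}$.

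Next I would lift everything to characteristic zero. By \citet[Th\'eor\`em 8.3]{grothendieck2002revtextbackslashtextasciicircumetements}, the character $\chi_0$ lifts uniquely to $\tilde{\chi}_0 \in \H^1(\hat R_V)$, which in turn gives a cyclic \'etale cover $\spec \hat R_W \to \spec \hat R_V$ of degree $e$ lifting $W \to V$; let $L_V$ denote its fraction field, a cyclic degree-$e$ extension of $F_V$. By \citet{cipolla1977remarks} the Azumaya $A_0$ lifts uniquely to an Azumaya algebra $\tilde A_W$ of degree $d$ over $\hat R_W$ whose Brauer class is $\tilde{\gamma}_0 \otimes \hat R_W$. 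Base-changing to $L_V$ produces a degree-$d$ central simple $L_V$-algebra in the class of $\tilde{\gamma}_0 \otimes L_V$, so $\ind(\tilde{\gamma}_0 \otimes L_V) \mid d$. Applying Proposition \ref{pro:ele-index} to the decomposition $\gamma_V = \tilde{\gamma}_0 + (\tilde{\chi}_0, t) \in \H^2(F_V)$ with the cyclic extension $L_V/F_V$ then yields
\[
\ind(\gamma_V) \;\Big|\; \ind(\tilde{\gamma}_0 \otimes L_V) \cdot \exp(\tilde{\chi}_0) \;\Big|\; d \cdot e \;=\; \ind(\hat{\gamma}),
\]
as desired.

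The main technical obstacle is Step 1: arranging simultaneously that $W$ is integral with function field $E$ and that $(\gamma_0)_{k[W]}$ is represented by an Azumaya algebra of degree \emph{exactly} $d$ rather than some proper multiple. Both are achieved by successive shrinking of $V$, using that the generic division algebra $D$ spreads out over a dense open of $W_0$ and that the finite map $\pi$ allows one to pull such an open back to the complement of finitely many closed points of $V_0$.
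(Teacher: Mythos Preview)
Your proof is correct and follows the same overall strategy as the paper: decompose $\hat{\gamma}=\gamma_0+(\chi_0,t)$, invoke the index formula $\ind(\hat{\gamma})=d\cdot e$ with $d=\ind((\gamma_0)_E)$ and $e=\exp(\chi_0)$, and then use Proposition~\ref{pro:ele-index} to reduce to showing $\ind((\tilde{\gamma}_0)_{L_V})\mid d$ for a suitable $V$.

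The difference lies only in how that last divisibility is obtained. The paper argues via generalized Severi--Brauer varieties: the $l$-point of $\sb_d(\gamma_0)$ spreads to a section over some open $V\subset U$, and Lemma~\ref{lem:hhk-hensel} lifts it to $\spec(\hat R_V)$, giving an $L_V$-point of $\sb_d(\tilde{\gamma}_0)$. You instead spread the degree-$d$ division algebra over $E$ to an Azumaya algebra $A_0$ on an open $W$, and then lift $A_0$ itself to $\hat R_W$. Both arguments produce the same conclusion; yours is arguably more concrete (it hands you an explicit degree-$de$ algebra over $F_V$), while the paper's Severi--Brauer/Hensel approach is uniform with what is done later over closed points (Lemma~\ref{lem:index-core}).

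One small caveat: the statement you cite from \citet{cipolla1977remarks} is an isomorphism $\H^2(\hat R_W)\cong\H^2(k[W])$ of Brauer groups, not a lifting theorem for Azumaya algebras as such. The lifting of $A_0$ to a degree-$d$ Azumaya over $\hat R_W$ is still true---it follows from the smoothness of $PGL_d$ (equivalently, the topological invariance of the \'etale site for the henselian pair $(\hat R_W,t\hat R_W)$)---but you should attribute it accordingly rather than to Cipolla. Also, ``uniquely'' is too strong for the algebra; only its Brauer class lifts uniquely, which is all you need.
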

\begin{proof}
Recall that $\hat{\gamma}=\gamma_{0}+(\chi_{0,}t)\in\H^{2}(\hat{F})$
where $\gamma_{0}\in\H^{2}(k(X))$ and $\chi_{0}\in\H^{1}(k(X))$.
Therefore $\ind(\hat{\gamma})=\ind(\gamma_{0,l})\cdot\exp(\chi_{0})$,
where $l/k(X)$ is the field extension determined by $\chi_{0}$,
by \citet[Theorem 5.15]{jacob1990division}, since $\hat{F}$ is a
complete discretely valued field. 

Let $U$ be an open affine subset of $X$ such that neither $\gamma_{0}$
nor $\chi_{0}$ ramifies on any point of $U$. Recall that $\gamma_{U}=\tilde{\gamma}_{0}+(\tilde{\chi}_{0},t$)
where $\tilde{\gamma}_{0}\in\H^{2}(\hat{R}_{U})$ and $\tilde{\chi}_{0}\in\H^{1}(\hat{R}_{U})$.
Note that $\exp(\tilde{\chi}_{0})=\exp(\chi_{0})$ since $\H^{1}(\hat{R}_{U})\cong\H^{1}(k(X))$.
By Proposition \ref{pro:ele-index}, we have $\ind(\gamma_{U})|\ind(\tilde{\gamma}_{0,S})\cdot\exp(\tilde{\chi}_{0})$,
where $S/\hat{R}_{U}$ denotes the Galois cyclic extension determined
by $\tilde{\chi}_{0}$. Note when $V\subseteq U$, we have $\H^{r}(k[U])\subseteq\H^{r}(k[V])$
by purity, and hence $\H^{r}(\hat{R}_{U})\subseteq\H^{r}(\hat{R}_{V});$
so we have $\tilde{\gamma}_{0}\in\H^{2}(\hat{R}_{V})$ and $\tilde{\chi}_{0}\in\H^{1}(\hat{R}_{V})$.
Therefore it suffices to find some affine open subset $V\subset U$
such that $\ind(\tilde{\gamma}_{0,S'})|\ind(\gamma_{0,l})$, where
$S'/\hat{R}_{V}$ denotes the Galois cyclic extension determined by
$\tilde{\chi}_{0}$. 

Let $i=\ind(\gamma_{0,l})$ be the index of the restriction of $\gamma_{0}$
to $l$. Then Proposition \ref{pro:kmrt-sbv} implies that $\sb_{i}(\gamma_{0})(l)\ne\phi$;
in other words, there is an $l$-rational point in the $i$-th generalized
Severi-Brauer variety of $\gamma_{0}$. Hence the $\spec(k(X))$-morphism
$\pi:\sb_{i}(\gamma_{0})\times_{k(X)}l\to\spec(l)$ has a section
$\spec(l)\to\sb_{i}(\gamma_{0})\times_{k(X)}l$ over $\spec(k(X))$,
the generic point of the closed fibre $U$ of $\spec(\hat{R}_{U})$.
Choose a Zariski dense open subset $V\subseteq U$ such that this
section over $\spec(k(X))$ extends to a section over $V$, and such
that the image of this latter section lies in an open subset of $\sb_{i}(\gamma_{0})\times_{k(X)}l$
that is affine over $\hat{R}_{U}$. Then by Lemma \ref{lem:hhk-hensel},
the section over $V$ lifts to a section over $\spec(\hat{R}_{V})$,
thus we obtain an $L$-rational point of $\sb_{i}(\tilde{\gamma}_{0})\times_{\hat{R}_{V}}S'$,
where $L/F_{V}$ is the Galois cyclic extension determined by $\tilde{\chi}_{0}$;
or equivalently, $L$ is the fraction field of $S'$. This implies
that $\ind(\tilde{\gamma}_{0,S'})|i=\ind(\gamma_{0,l})$ by Proposition
\ref{pro:kmrt-sbv} again. 
\end{proof}

\subsection{Index Computation Over Closed Points}

It remains to show $\ind(\gamma_{P})|\ind(\hat{\gamma})$. This is
what we are going to do in this section. Note that $\gamma_{P}$ is
defined as $s_{P}\circ\res_{F_{\mathfrak{p}}}(\gamma_{U})$, where
$\res_{F_{\mathfrak{p}}}$ can only lower index of $\gamma_{U}$.
Since we have already shown that $\ind(\gamma_{U})|\ind(\hat{\gamma})$,
we have that $\ind(\gamma)$ will be completely determined by $\ind(\gamma_{U})$
if we could show that $\ind(\gamma_{\mathfrak{p}})$ does not go up
under the map $s_{P}$. Therefore we just need to show that $s_{P}$
cannot increase index of Brauer classes, or, $\ind(\gamma_{P})=\ind(s_{P}(\gamma_{\mathfrak{p}}))|\ind(\gamma_{\mathfrak{p}})$
.

We compute $\ind(\gamma_{\mathfrak{p}})$ first. Since $F_{\mathfrak{p}}$
is a complete discretely valued field, we have $\ind(\gamma_{\mathfrak{p}})=\ind((\gamma_{0,0}+(\chi_{1},\pi))_{M})\cdot\exp(\chi_{2}+(\pi^{r}))$,
where $M/\kappa(\mathfrak{p})$ is the Galois cyclic extension determined
by $\chi_{2}+(\pi^{r})\in\H^{1}(\kappa(\mathfrak{p}))$ by \citet[Theorem 5.15]{jacob1990division}.
It is not hard to compute $\ind((\gamma_{0,0}+(\chi_{1},\pi))_{M})$:
Since $M$ is a finite extension of $\kappa(\mathfrak{p})$, which
is a complete discretely valued field, we have that $M$ is a complete
discretely valued field as well. Let $e$ be the ramification index
of $M/\kappa(\mathfrak{p})$ and $\bar{M}$ the residue field of $M$.
Then by \citet[Exercise XII.3.2]{serre1979localfields}, $(\gamma_{0,0}+(\chi_{1,}\pi))_{M}=(\gamma_{0,0})_{\bar{M}}+(e\cdot\chi_{1},\pi')$,
where $\pi'$ is some uniformizer of $M$. Let $L/\kappa(\mathfrak{p})$
be the field extension determined by $e\cdot\chi_{1}$ and $\bar{L}$
the residue field of $L$. Then $\ind((\gamma_{0,0}+(\chi_{1,}\pi))_{M})=\ind((\gamma_{0,0})_{\bar{M}}+(e\cdot\chi_{1},\pi'))=\ind((\gamma_{0,0})_{\bar{M}\bar{L}})\cdot\exp(e\cdot\chi_{1})$. 

Now that we have an index formula for Brauer classes over $F_{\mathfrak{p}}$,
we are ready to show the following 
\begin{prop}
\label{pro:idx-compare}Let $T$ be a complete discrete valuation
ring. Let $\hat{X}$ be a smooth projective $T$-curve with closed
fibre $X$. Suppose that $U$ is an open affine subset of $X$ and
$P\in X\backslash U$ is a closed point. Let $\mathfrak{p}=(U,P)$
be the unique branch of $U$ at $P$ and let $\gamma_{P}$ and $\gamma_{\mathfrak{p}}$
be defined as above. Then we have $\ind(\gamma_{P})|\ind(\gamma_{\mathfrak{p}})$. \end{prop}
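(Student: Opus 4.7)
The approach is to produce a finite extension $\tilde{E}/F_P$ that splits $\gamma_P$ and satisfies $[\tilde{E}:F_P] \mid \ind(\gamma_{\mathfrak{p}})$; this will immediately give $\ind(\gamma_P) \mid [\tilde{E}:F_P] \mid \ind(\gamma_{\mathfrak{p}})$. I would build the tower $F_P \subset \tilde{M} \subset \tilde{N} \subset \tilde{E}$ so that it mirrors, step for step, the tower $\kappa(\mathfrak{p}) \subset M \subset \bar{M}\bar{L}\cdot\kappa(\mathfrak{p}) \subset (\text{splitter of }\gamma_{0,0})$ implicit in the index formula
\[
\ind(\gamma_{\mathfrak{p}}) = \ind((\gamma_{0,0})_{\bar{M}\bar{L}}) \cdot \exp(e \cdot \chi_1) \cdot \exp(\chi_2 + (\pi^r))
\]
established just above the proposition.

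First, I would take $\tilde{M}/F_P$ to be the cyclic extension determined by the lifted character $\tilde{\chi}_2 + (\pi^r) \in \H^1(F_P)$. Since Lemma \ref{lem:lift-characters} produces a lift of the same order, $[\tilde{M}:F_P] = \exp(\chi_2 + (\pi^r))$, matching the outer factor of the formula. Over $\tilde{M}$ the summand $(\tilde{\chi}_2 + (\pi^r), t)$ vanishes, leaving $\gamma_P|_{\tilde{M}} = \tilde{\gamma}_{0,0}|_{\tilde{M}} + (\tilde{\chi}_1|_{\tilde{M}}, \pi)$. Next, let $\tilde{N}/\tilde{M}$ be the cyclic extension cut out by $\tilde{\chi}_1|_{\tilde{M}}$; over $\tilde{N}$ the class becomes $\tilde{\gamma}_{0,0}|_{\tilde{N}}$, and $[\tilde{N}:\tilde{M}] = \exp(\tilde{\chi}_1|_{\tilde{M}})$. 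To keep this factor under control I must show $\exp(\tilde{\chi}_1|_{\tilde{M}}) \mid \exp(e \cdot \chi_1)$, which rests on the fact that the ramification index of $\tilde{M}/F_P$ along the $\mathfrak{p}$-adic valuation equals $e$, so that restriction of the unramified lift $\tilde{\chi}_1$ corresponds at the residue level to $e\cdot\chi_1$.

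For the last stage I would imitate the argument used in the open-affine case. Set $i = \ind((\gamma_{0,0})_{\bar{M}\bar{L}})$; by Proposition \ref{pro:kmrt-sbv} there is an $\bar{M}\bar{L}$-point of $\sb_i(\gamma_{0,0})$. I would then exhibit a complete local subring of $\tilde{N}$ containing $\hat{R}_P$ whose residue field contains $\bar{M}\bar{L}$, and apply Lemma \ref{lem:hhk-hensel} to lift this rational point along that subring. The generic fibre of the extended section gives a point of $\sb_i(\tilde{\gamma}_{0,0})$ over a field extension $\tilde{E}/\tilde{N}$ of degree $i$. A second application of Proposition \ref{pro:kmrt-sbv} then yields $\ind(\tilde{\gamma}_{0,0}|_{\tilde{E}}) \mid i$, so $\tilde{E}$ splits $\gamma_P$ and $[\tilde{E}:F_P] \mid \ind(\gamma_{\mathfrak{p}})$, as desired.

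The main obstacle is the comparison in the middle stage: verifying $\exp(\tilde{\chi}_1|_{\tilde{M}}) \mid \exp(e \cdot \chi_1)$. This forces one to track precisely how the character lifts supplied by Lemma \ref{lem:lift-characters} interact with the two-dimensional local structure on $\hat{R}_P$ given by the parameters $(\pi, t)$ — specifically, to check that the integral closure of $\hat{R}_P$ in $\tilde{M}$ reduces modulo $t$ to an integral model of $M$ inside $\kappa(\mathfrak{p})$, so that ramification index and residue field of $\tilde{M}/F_P$ along $\mathfrak{p}$ coincide with those of $M/\kappa(\mathfrak{p})$. The Hensel step is routine by comparison since generalized Severi-Brauer schemes are everywhere smooth over their base; the only care needed is choosing an affine open neighbourhood of the section before invoking Lemma \ref{lem:hhk-hensel}.
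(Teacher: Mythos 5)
Your overall strategy (build a splitting field for $\gamma_P$ of controlled degree, then invoke Proposition~\ref{pro:kmrt-sbv} and Lemma~\ref{lem:hhk-hensel} at the residue stage) is reasonable, and the first and last rungs of your tower are sound. The fatal problem is the middle rung, and the divisibility claim you yourself flag as the main obstacle, namely $\exp(\tilde{\chi}_1|_{\tilde{M}}) \mid \exp(e \cdot \chi_1)$, is simply false.

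The confusion is between restricting a \emph{character} and taking the residue of a restricted \emph{Brauer class}. When $\tilde{M}/F_P$ has ramification index $e$ at the $\pi$-adic valuation, the restriction of the unramified character $\tilde{\chi}_1$ to $\tilde{M}$ has residue $\chi_1|_{\bar{M}}$, \emph{not} $e\cdot\chi_1$; the factor $e$ appears only when you restrict the cup product $(\tilde{\chi}_1,\pi)$ and rewrite it via $\pi = u(\pi')^{e}$ as $(e\,\tilde{\chi}_1|_{\tilde{M}},\pi')$ modulo an unramified term. Concretely: take $\chi_2 = 0$, $(\pi^r)$ of order $q$ (so $e = q$), and $\chi_1$ of order $q^{a}$ with $a\ge 2$. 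Then $\tilde{M}/F_P$ is totally ramified at $\pi$, so it meets the unramified cyclic extension cut out by $\tilde{\chi}_1$ trivially and $\exp(\tilde{\chi}_1|_{\tilde{M}}) = q^{a}$, while $\exp(e\cdot\chi_1) = q^{a-1}$. Your $\tilde{N}$ then has degree over $F_P$ exceeding $\ind(\gamma_{\mathfrak{p}})$ by a factor of $q$, and the tower cannot deliver the required divisibility.

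The repair is essentially what the paper does: do not pass to the full extension cut out by $\tilde{\chi}_1|_{\tilde{M}}$. Instead, over $\tilde{M}$ rewrite
\[
\gamma_P|_{\tilde{M}} \;=\; (\tilde{\gamma}_{0,0})_{\tilde{M}} \;+\; \bigl(e\cdot(\tilde{\chi}_1)_{\tilde{M}},\, \pi'\bigr),
\]
and apply Proposition~\ref{pro:ele-index} to this decomposition, so that the controlling factor is $\exp\bigl(e\cdot(\tilde{\chi}_1)_{\tilde{M}}\bigr)$, which \emph{does} divide $\exp(e\cdot\chi_1)$ because $(\tilde{\chi}_1)_{\tilde{M}}$ has order dividing $\exp(\chi_1)$ and multiplication by $e$ is order-decreasing. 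Equivalently, in your tower you should take $\tilde{N}/\tilde{M}$ to be cut out by $e\cdot\tilde{\chi}_1|_{\tilde{M}}$ (the extension written $\tilde{M}\tilde{L}$ in the paper), not by $\tilde{\chi}_1|_{\tilde{M}}$, and then verify that this still kills the second summand (it does, since $(\pi')^{e}$ differs from $\pi$ by a unit that is absorbed into the unramified part). A secondary point: for the Hensel step the paper also replaces $\tilde{M}$ by the unramified extension $\tilde{M}'$ cut out by $\tilde{\chi}_2$ before lifting the Severi--Brauer point, precisely because the integral closure of $\hat{R}_P$ in the ramified field $\tilde{M}$ need not sit well over $\hat{R}_P$ for Lemma~\ref{lem:hhk-hensel}; you would need to make the same adjustment, which your proposal does not address.
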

\begin{proof}
By Proposition \ref{pro:ele-index} we have that $\ind(\gamma_{P})|\ind((\tilde{\gamma}_{0,0}+(\tilde{\chi}_{1},\pi))_{\tilde{M}})\cdot\exp(\tilde{\chi}_{2}+(\pi^{r}))$,
where $\tilde{M}/F_{P}$ is the Galois cyclic extension determined
by $\tilde{\chi}_{2}+(\pi^{r})$. We claim that $\exp(\tilde{\chi}_{2}+(\pi^{r}))=\exp(\chi_{2})+(\pi^{r})$:
we have that $\exp(\tilde{\chi}_{2}+(\pi^{r}))=\lcm(\exp(\tilde{\chi}_{2}),\exp((\pi^{r})))$
and $\exp(\chi_{2}+(\pi^{r}))=\lcm(\exp(\chi_{2}),\exp((\pi^{r})))$.
Since $\exp(\tilde{\chi}_{2})=\exp(\chi_{2})$, we have proved that
$\exp(\tilde{\chi}_{2}+(\pi^{r}))=\exp(\chi_{2}+(\pi^{r}))$. Therefore
this proposition will follow if we can show that $\ind((\tilde{\gamma}_{0,0}+(\tilde{\chi}_{1},\pi))_{\tilde{M}})|\ind((\gamma_{0,0})_{\bar{M}}+(e\cdot\chi_{1},\pi'))$. 

Next we compute \begin{eqnarray*}
(\tilde{\gamma}_{0,0}+(\tilde{\chi}_{1},\pi))_{\tilde{M}} & = & (\tilde{\gamma}_{0,0})_{\tilde{M}}+(\tilde{\chi}_{1},\pi)_{\tilde{M}}\\
 & = & (\tilde{\gamma}_{0,0})_{\tilde{M}}+((\tilde{\chi}_{1})_{\tilde{M}},\pi)\\
 & = & (\tilde{\gamma}_{0,0})_{\tilde{M}}+((\tilde{\chi}_{1})_{\tilde{M}},(\pi')^{e})\\
 & = & (\tilde{\gamma}_{0,0})_{\tilde{M}}+(e\cdot(\tilde{\chi}_{1})_{\tilde{M}},\pi')\end{eqnarray*}

By Proposition \ref{pro:ele-index} again we immediately see that
$\ind((\tilde{\gamma}_{0,0}+(\tilde{\chi}_{1},\pi))_{\tilde{M}})|\ind((\tilde{\gamma}_{0,0})_{\tilde{M}\tilde{L}})\cdot\exp(e\cdot(\tilde{\chi}_{1})_{\tilde{M}})$,
where $\tilde{L}/F_{P}$ denotes the Galois cyclic extension determined
by $e\cdot\tilde{\chi}_{1}$. Clearly $\exp(e\cdot(\tilde{\chi}_{1})_{\tilde{M}})|\exp(e\cdot(\chi_{1}))$,
so we will be done if we can show that $\ind((\tilde{\gamma}_{0,0})_{\tilde{M}\tilde{L}})|\ind((\gamma_{0,0})_{\bar{M}\bar{L}})$,
which we will do in the following Lemma \ref{lem:index-core}.\end{proof}
\begin{lem}
\label{lem:index-core}In line with the notation in \ref{pro:idx-compare},
we have that $\ind((\tilde{\gamma}_{0,0})_{\tilde{M}\tilde{L}})|\ind((\gamma_{0,0})_{\bar{M}\bar{L}})$. \end{lem}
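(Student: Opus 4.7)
The plan is to parallel the open-affine argument of the previous subsection: exhibit a $\tilde{M}\tilde{L}$-rational point on the Severi--Brauer scheme $\sb_{j}(\tilde{\gamma}_{0,0})$, where $j=\ind((\gamma_{0,0})_{\bar{M}\bar{L}})$, by Hensel-lifting a $\bar{M}\bar{L}$-rational point from the appropriate residue fiber. By Proposition~\ref{pro:kmrt-sbv}, such a rational point immediately forces $\ind((\tilde{\gamma}_{0,0})_{\tilde{M}\tilde{L}})\mid j$, which is precisely the claim.

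Proposition~\ref{pro:kmrt-sbv} applied to $\gamma_{0,0}$ and the extension $\bar{M}\bar{L}/\kappa(P)$ supplies a $\bar{M}\bar{L}$-rational point $P_{0}$ of $\sb_{j}(\gamma_{0,0})$. Let $B$ be the integral closure of $\hat{R}_{P}$ in $\tilde{M}\tilde{L}$. Being module-finite over the complete local ring $\hat{R}_{P}$, $B$ is Noetherian semilocal; since $B/(t,\pi)B$ is artinian, the $J(B)$-adic and $(t,\pi)B$-adic topologies coincide, so $B$ is complete with respect to its Jacobson radical. A complete Noetherian semilocal ring decomposes as a product of complete local rings; since $B$ is a domain (sitting inside the field $\tilde{M}\tilde{L}$), this product has one factor, so $B$ is a complete local domain with $\mathrm{Frac}(B)=\tilde{M}\tilde{L}$. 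Moreover its residue field is $\bar{M}\bar{L}$: the extension $\tilde{M}\tilde{L}/F_{P}$ is unramified at the height-one prime $(t)$ (both defining characters $\tilde{\chi}_{2}+(\pi^{r})$ and $e\tilde{\chi}_{1}$ restrict to elements of $\H^{1}(\kappa(\mathfrak{p}))\subset\H^{1}(F_{\mathfrak{p}})$), so $B/tB$ identifies with the integral closure of the complete DVR $\hat{R}_{P}/t$ inside $\tilde{M}\tilde{L}\otimes_{F_{P}}F_{\mathfrak{p}}$; the latter is an unramified extension of $F_{\mathfrak{p}}$ with residue field $M\cdot L$, so $B/tB$ is the valuation ring $O_{M\cdot L}$, and a further quotient by its maximal ideal yields precisely $\bar{M}\bar{L}$.

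Because $\tilde{\gamma}_{0,0}\in\H^{2}(\hat{R}_{P})$ is represented by an Azumaya algebra, the Severi--Brauer scheme $\mathcal{Y}:=\sb_{j}(\tilde{\gamma}_{0,0})$ extends to a smooth projective $\hat{R}_{P}$-scheme; the base change $\mathcal{Y}\times_{\hat{R}_{P}}B$ is then smooth projective over $\spec(B)$, with closed fiber $\sb_{j}((\gamma_{0,0})_{\bar{M}\bar{L}})$ containing $P_{0}$. Choose an affine open $V\subset\mathcal{Y}\times_{\hat{R}_{P}}B$ through (the image of) $P_{0}$, so $V\to\spec(B)$ is affine. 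Apply Lemma~\ref{lem:hhk-hensel} with $R=B$, $I=\mathfrak{m}_{B}$, and $n=1$: the section $P_{0}\colon\spec(\bar{M}\bar{L})\to V\times_{B}\bar{M}\bar{L}$ lies in the smooth locus of $V\to\spec(B)$ (as $\mathcal{Y}$ is smooth), so extends to a section $\spec(B)\to V$. Taking generic fibers yields a $\tilde{M}\tilde{L}$-rational point of $\sb_{j}(\tilde{\gamma}_{0,0})$, and Proposition~\ref{pro:kmrt-sbv} then delivers $\ind((\tilde{\gamma}_{0,0})_{\tilde{M}\tilde{L}})\mid j$, as required.

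The technically delicate step is the identification $B/\mathfrak{m}_{B}=\bar{M}\bar{L}$. Locality of $B$ follows formally from its being a complete semilocal Noetherian domain, but pinning down the residue field requires tracing the double residue: first completing at $(t)$ turns $\tilde{M}\tilde{L}$ into an unramified extension of $F_{\mathfrak{p}}$ with residue field $M\cdot L$, and then taking the residue of the complete DVR $M\cdot L$ over $\kappa(\mathfrak{p})$ produces $\bar{M}\bar{L}$; so the ramification of $\tilde{M}\tilde{L}/F_{P}$ at $(\pi)$ degenerates in the passage to $B/tB$, leaving only the ``unramified part'' of the defining characters to govern the residue at $\mathfrak{m}_{B}$.
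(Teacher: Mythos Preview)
Your argument is correct and uses the same core mechanism as the paper---produce a rational point on the residue Severi--Brauer variety via Proposition~\ref{pro:kmrt-sbv}, Hensel-lift it via Lemma~\ref{lem:hhk-hensel}, and apply Proposition~\ref{pro:kmrt-sbv} again---but the two proofs diverge in one respect. The paper first passes from $\tilde{M}$ to the auxiliary extension $\tilde{M}'/F_{P}$ determined by $\tilde{\chi}_{2}$ alone, invoking the divisibility $\ind((\tilde{\gamma}_{0,0})_{\tilde{M}\tilde{L}})\mid\ind((\tilde{\gamma}_{0,0})_{\tilde{M}'\tilde{L}})$; the point is that the integral closure $S$ of $\hat{R}_{P}$ in $\tilde{M}'\tilde{L}$ is then \emph{\'etale} over $\hat{R}_{P}$, so the residue-field identification needed for the Hensel step is automatic. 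You instead work directly with $B$, the integral closure in $\tilde{M}\tilde{L}$ itself, which is genuinely ramified along $(\pi)$, and you pay for this by having to pin down that $B$ is complete local with residue field $\bar{M}\bar{L}$. Your identification is right (tameness and the regular branch divisor $(\pi)$ make $B$ regular; unramifiedness at $(t)$ together with the faithful restriction of the defining characters to $\H^{1}(\kappa(\mathfrak{p}))$ gives a unique prime over $(t)$, so $B/tB$ is the complete DVR $O_{ML}$ with residue field $\overline{ML}=\bar{M}\bar{L}$), though your sketch of it is somewhat informal---in particular, ``integral closure of $\hat{R}_{P}/t$ inside $\tilde{M}\tilde{L}\otimes_{F_{P}}F_{\mathfrak{p}}$'' should read ``inside $ML$'', the residue field of that tensor product. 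The paper's detour through $\tilde{M}'$ buys a one-line residue-field check at the cost of an extra reduction; your direct route trades that reduction for a more careful structure analysis of $B$.
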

\begin{proof}
Let $\tilde{M}'/F_{P}$ be the Galois cyclic extension determined
by $\chi_{2}$. Clearly it suffices to prove that $\ind((\tilde{\gamma}_{0,0})_{\tilde{M}'\tilde{L}})|\ind((\gamma_{0,0})_{\bar{M}\bar{L}})$
since $\ind((\tilde{\gamma}_{0,0})_{\tilde{M}\tilde{L}})|\ind((\tilde{\gamma})_{\tilde{M}'\tilde{L}})$.
Let $i=\ind((\gamma_{0,0})_{\bar{M}\bar{L}})$. By Proposition \ref{pro:kmrt-sbv},
we have that $\sb_{i}(\gamma_{0,0})(\bar{M}\bar{L})\ne\phi$, or equivalently,
the morphism $\sb_{i}(\gamma_{0,0})\times_{\kappa(P)}\bar{M}\bar{L}$
has a section $\spec(\bar{M}\bar{L})\to\sb_{i}(\gamma_{0,0})\times_{\kappa(P)}\bar{M}\bar{L}$.
By Lemma \ref{lem:hhk-hensel}, this section lifts to a section over
$\spec(\hat{R}_{P})$; thus we obtain a $\tilde{M}'\tilde{L}$-rational
point of $\sb_{i}(\tilde{\gamma}{}_{0,0})\times_{\hat{R}_{P}}S$(note
that $\gamma_{0,0}\in\H^{2}(\hat{R}_{P})$), where $S$ is the integral
closure of $\hat{R}_{P}$ in $\tilde{M}'\tilde{L}$; or equivalently,
a $\tilde{M}'\tilde{L}$-rational point of $\sb_{i}(\tilde{\gamma}{}_{0,0})\times_{F_{P}}\tilde{M}'\tilde{L}$.
Therefore $\ind((\tilde{\gamma}_{0,0})_{\tilde{M}'\tilde{L}})|i$
again by Proposition \ref{pro:kmrt-sbv}, which proves this lemma. 
\end{proof}
The following Corollary is immediate: 
\begin{cor}
\label{cor:preserve-index}The homomorphism $s:\H^{2}(\hat{F})\to\H^{2}(F)$
preserves index of Brauer classes. \end{cor}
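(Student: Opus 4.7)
The plan is to chain together the results already proved in this section together with Corollary~\ref{cor:raise-index} from the previous section. From Corollary~\ref{cor:raise-index} we already have $\ind(\hat{\gamma}) \mid \ind(\gamma)$, so the only thing left to establish is the reverse divisibility $\ind(\gamma) \mid \ind(\hat{\gamma})$, after which equality is automatic.

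For the reverse divisibility I would appeal to Theorem~\ref{thm:hhk-index}, which expresses
\[
\ind(\gamma) \;=\; \lcm_{\xi \in \mathfrak{P} \cup \mathfrak{U}} \ind(\gamma_{F_{\xi}}).
\]
Thus it suffices to check that each factor $\ind(\gamma_U)$ and $\ind(\gamma_P)$ divides $\ind(\hat{\gamma})$. The first of these is precisely the content of the proposition proved in Section~4.1 (combined with Lemma~\ref{lem:Indep-open-affine}, which lets us shrink $U$ without changing $\gamma$).

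For the second, I would use Proposition~\ref{pro:idx-compare} to obtain $\ind(\gamma_P) \mid \ind(\gamma_{\mathfrak{p}})$, and then observe that by construction $\gamma_{\mathfrak{p}} = \res_{F_{\mathfrak{p}}}(\gamma_U)$, so that restriction gives $\ind(\gamma_{\mathfrak{p}}) \mid \ind(\gamma_U)$. Combining these two divisibilities with the bound $\ind(\gamma_U) \mid \ind(\hat{\gamma})$ established in Section~4.1 yields $\ind(\gamma_P) \mid \ind(\hat{\gamma})$ for every $P \in \mathfrak{P}$.

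Putting everything together, every term in the lcm on the right-hand side of Theorem~\ref{thm:hhk-index} divides $\ind(\hat{\gamma})$, so $\ind(\gamma) \mid \ind(\hat{\gamma})$. Combined with Corollary~\ref{cor:raise-index}, we conclude $\ind(\gamma) = \ind(\hat{\gamma})$. There is no serious obstacle here—the corollary is essentially a bookkeeping consequence of Proposition~\ref{pro:idx-compare}, the affine-open index bound of Section~4.1, Theorem~\ref{thm:hhk-index}, and Corollary~\ref{cor:raise-index}; the main point is simply to verify that the three divisibilities fit together cleanly through the patching index formula.
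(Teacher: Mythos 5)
Your argument is correct and is essentially the argument the paper itself makes: although the paper's stated proof of the corollary is terse (``Corollary~\ref{cor:raise-index} plus Proposition~\ref{pro:idx-compare}''), the paper's preamble to Section~\ref{sec:Index-Computation} lays out exactly the same chain you describe, namely invoking Theorem~\ref{thm:hhk-index} for $\ind(\gamma)=\lcm(\ind(\gamma_U),\ind(\gamma_P))$, the affine-open bound $\ind(\gamma_U)\mid\ind(\hat{\gamma})$ from Section~4.1 (together with Lemma~\ref{lem:Indep-open-affine}), and the fact that $\gamma_{\mathfrak{p}}=\res_{F_{\mathfrak{p}}}(\gamma_U)$ so that Proposition~\ref{pro:idx-compare} forces $\ind(\gamma_P)\mid\ind(\gamma_U)$. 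You have simply made explicit the bookkeeping that the paper compresses into a one-sentence proof.
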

\begin{proof}
This is simply Corollary \ref{cor:raise-index} plus Proposition \ref{pro:idx-compare}.
\end{proof}

\section{Indecomposable and noncrossed product Division Algebras over Curves
over complete Discrete Valuation Rings \label{sec:Indecomposable-and-Noncrossed-Product}}

Let $T$ be a complete discrete valuation ring. Let $\hat{X}$ be
a smooth projective $T$-curve with closed fibre $X$. Let $F$ be
the function field of $\hat{X}$ and $\hat{F}$ the completion of
$F$ with respect to $ $the discrete valuation determined by $X$.
We construct indecomposable division algebras and noncrossed product
division algebras over $F$ of prime power index for all primes $q$
where $q$ is different from the characteristic of the residue field
of $T$. Note that the existence of such algebras are already known
when residue field of $T$ is a finite filed, cf. \citet{brussel2009indecomposable}.
 Our construction here is almost identical to \citet[Section 4]{brussel2009indecomposable},
we list it here for the reader's convenience.

\subsection{Indecomposable Division Algebras over $F$}

First we recall the construction of indecomposable division algebras
over $\hat{F}$, this is done in \citet[Proposition 4.2]{brussel2009indecomposable}.
\begin{prop}
\label{pro:ind-completion} Let $T$ be a complete discrete valuation
ring and let $\hat{X}$ be a smooth projective curve over $\spec(T)$
with closed fibre $X$. Let $F$ be the function field of $\hat{X}$
and $\hat{F}$ the completion of $F$ with respect to the discrete
valuation induced by $X$. Let $e,i$ be integers satisfying $l\le e\le2e-1$.
For any prime $q\ne\chara(k)$, there exists a Brauer class $\hat{\gamma}\in\H^{2}(\hat{F})$
satisfying $\ind(\hat{\gamma})=q^{i},\exp(\hat{\gamma})=q^{e}$ and
whose underlying division algebra is indecomposable.
\end{prop}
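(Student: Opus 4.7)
The plan is to exploit the Witt decomposition
\[
\H^2(\hat F)\cong\H^2(k(X))\oplus\H^1(k(X)),
\]
which is available because $\hat F$ is complete with residue field $k(X)$ and uniformizer $t$, the second summand being embedded via cup product with $(t)$. Every class in $\H^2(\hat F)$ therefore takes the form $\gamma_0+(\chi_0,t)$, and the Jacob--Wadsworth formula (\citet[Theorem 5.15]{jacob1990division}) gives $\ind(\gamma_0+(\chi_0,t))=\ind((\gamma_0)_L)\cdot[L:k(X)]$, where $L/k(X)$ is the cyclic extension cut out by $\chi_0$. The construction reduces to choosing the residual data $(\gamma_0,\chi_0)$ appropriately.

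First I would pick a character $\chi_0\in\H^1(k(X))$ of exact order $q^e$ together with a Brauer class $\gamma_0\in\H^2(k(X))$ of exponent dividing $q^e$ whose restriction $(\gamma_0)_L$ has index exactly $q^{i-e}$; the hypothesis $e\le i\le 2e-1$ translates to $0\le i-e\le e-1$, so the target index lies strictly below $q^e$, making the choice feasible. Existence of such a pair uses that $k(X)$ is the function field of a curve over the residue field $k$ of $T$, which supplies enough characters and Brauer classes of the required prime-power orders (e.g.\ via symbol constructions). Setting $\hat\gamma=\gamma_0+(\chi_0,t)$, the index formula gives $\ind(\hat\gamma)=q^{i-e}\cdot q^e=q^i$, while additivity in the Witt decomposition yields $\exp(\hat\gamma)=\lcm(\exp(\gamma_0),q^e)=q^e$.

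For indecomposability, suppose the underlying division algebra $D$ factors nontrivially as $D_1\otimes_{\hat F}D_2$ with $\ind(D_j)=q^{i_j}$, $i_1,i_2\ge 1$, $i_1+i_2=i$. Writing each $[D_j]=\gamma_j+(\chi_j,t)$ forces $\chi_0=\chi_1+\chi_2$ and $\gamma_0=\gamma_1+\gamma_2$. Because $\chi_0$ has order exactly $q^e$, at least one $\chi_j$, say $\chi_1$, has order $q^e$; Jacob--Wadsworth applied to $D_1$ forces $i_1\ge e$, and $i_1+i_2\le 2e-1$ then forces $i_2\le e-1$. Restricting $\gamma_0=\gamma_1+\gamma_2$ to $L$ and combining with the index bounds on each $(\gamma_j)_L$ provided by Jacob--Wadsworth applied to the $D_j$ forces $\ind((\gamma_0)_L)$ to be strictly smaller than $q^{i-e}$, contradicting our choice of $\gamma_0$.

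The main obstacle is the rigidity required in the first step: $\gamma_0$ must be engineered so that $(\gamma_0)_L$ realizes its prescribed index of $q^{i-e}$ on the nose and cannot be split as a sum $(\gamma_1)_L+(\gamma_2)_L$ in which the summand indices multiply to something smaller than $q^{i-e}$. Producing such a $\gamma_0$ on the curve $k(X)$ is precisely the content of the constructions in \citet{brussel1996decomposability} and \citet{brussel2009indecomposable}, and the argument transplants to our setting because $k(X)$ is again the function field of a curve, this time over the arbitrary residue field $k$, with $\chara(k)\ne q$ guaranteeing the usual cohomological tools apply.
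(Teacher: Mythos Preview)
The paper does not actually prove this proposition: it is quoted verbatim as \citet[Proposition 4.2]{brussel2009indecomposable} and no argument is given beyond that citation.  Your sketch is therefore not competing with a proof in the paper but with the construction in the cited reference, and at the level of strategy (Witt decomposition of $\H^2(\hat F)$, the Jacob--Wadsworth index formula, choosing residual data $(\gamma_0,\chi_0)$ over $k(X)$) you are following exactly the template of \citet{brussel1996decomposability} and \citet{brussel2009indecomposable}.

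There is, however, a genuine gap in your indecomposability paragraph.  From a putative decomposition $[D_j]=\gamma_j+(\chi_j,t)$, Jacob--Wadsworth gives $\ind(D_j)=\ind((\gamma_j)_{L_j})\cdot[L_j:k(X)]$, where $L_j$ is the cyclic extension determined by $\chi_j$.  This bounds $(\gamma_j)_{L_j}$, not $(\gamma_j)_L$; since $\chi_0=\chi_1+\chi_2$ need not force $L=L_1$ or $L\subseteq L_j$, you cannot pass from these bounds to a bound on $\ind((\gamma_0)_L)=\ind((\gamma_1)_L+(\gamma_2)_L)$ as you claim.  In Brussel's actual argument this obstacle is handled not by a generic index inequality but by the \emph{specific} ramification and arithmetic constraints imposed on $\gamma_0$ and $\chi_0$ (controlling which primes can appear in any $\chi_j$ and which classes can arise as $\gamma_j$).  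You correctly flag in your final paragraph that the ``rigidity'' of $\gamma_0$ is where the real content lies and defer to the references, but the contradiction you wrote down just before that does not follow from what you have set up; it only follows once those extra constraints are in place.  A further point worth noting: the constructions in \citet{brussel1996decomposability,brussel2009indecomposable} use global-field arithmetic on $k(X)$ (density arguments, local--global input), so ``the argument transplants because $k(X)$ is the function field of a curve over $k$'' is not automatic for an arbitrary residue field $k$ and deserves a word of justification.
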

Then we lift $\hat{\gamma}$ to $F$ by using the splitting map $s$
we defined in section \ref{sec:Splitting-Map}, and show that the
lift is in fact indecomposable. 
\begin{thm}
In the notation of Theorem \ref{pro:ind-completion}. Then there exists
an indecomposable division algebra $D$ over $F$ such that $\ind(D)=q^{i}$
and $\exp(D)=q^{e}$ .\end{thm}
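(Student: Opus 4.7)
The plan is to take the indecomposable class $\hat{\gamma}\in\H^{2}(\hat{F})$ produced by Proposition~\ref{pro:ind-completion} and simply lift it by the splitting map, setting $\gamma=s(\hat{\gamma})$ and letting $D$ be the underlying division algebra of $\gamma$ over $F$. Then three things must be verified: $\ind(D)=q^{i}$, $\exp(D)=q^{e}$, and $D$ is indecomposable. The first of these is immediate from Corollary~\ref{cor:preserve-index}, which gives $\ind(\gamma)=\ind(\hat{\gamma})=q^{i}$, so nothing needs to be done there.

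For the exponent, the argument will be a matching pair of divisibilities. Since $s$ is a group homomorphism (established at the end of Section~\ref{sub:Construction-over-Closed} and inherited globally from the patching construction), we have $q^{e}\cdot\gamma = s(q^{e}\hat{\gamma}) = s(0) = 0$, so $\exp(\gamma)\mid q^{e}$. In the other direction, Proposition~\ref{pro:s-is-section} gives $\res_{\hat{F}}(\gamma)=\hat{\gamma}$, and restriction cannot raise exponent, so $q^{e}=\exp(\hat{\gamma})\mid\exp(\gamma)$. Combining these gives $\exp(D)=q^{e}$.

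The main point, and the step most in need of care, is indecomposability. The strategy is to show that any nontrivial decomposition of $D$ over $F$ would descend, via restriction to $\hat{F}$, to a nontrivial decomposition of the underlying division algebra of $\hat{\gamma}$, contradicting Proposition~\ref{pro:ind-completion}. Concretely, suppose $D\cong D_{1}\otimes_{F}D_{2}$ with each $D_{j}$ a nontrivial $F$-division algebra. Since $\ind(D)=q^{i}$ is a prime power, each $\ind(D_{j})=q^{a_{j}}$ with $a_{j}\geq 1$, and the fact that $D_{1}\otimes_{F}D_{2}$ is itself a division algebra forces $\ind(D)=\ind(D_{1})\ind(D_{2})$, i.e.\ $a_{1}+a_{2}=i$. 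Writing $\gamma_{j}=[D_{j}]\in\H^{2}(F)$ and restricting, one has $\hat{\gamma}=\res(\gamma_{1})+\res(\gamma_{2})$ with $\ind(\res(\gamma_{j}))\mid q^{a_{j}}$. The general inequality $\ind(\hat{\gamma})\mid\ind(\res(\gamma_{1}))\cdot\ind(\res(\gamma_{2}))$ combined with $a_{1}+a_{2}=i$ forces equality $\ind(\res(\gamma_{j}))=q^{a_{j}}$, and in particular both factors are nontrivial. Hence the underlying division algebra of $\hat{\gamma}$ decomposes as a tensor product of two nontrivial $\hat{F}$-division algebras of indices $q^{a_{1}}$ and $q^{a_{2}}$, contradicting the indecomposability assertion of Proposition~\ref{pro:ind-completion}.

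The hard part is really packaged inside Proposition~\ref{pro:ind-completion} and Corollary~\ref{cor:preserve-index}; once those are in hand, the proof is a short bookkeeping exercise in how index, exponent, and tensor decompositions behave under the section $s$ and its inverse $\res_{\hat{F}}$. No further hypotheses on the residue field $k$ of $T$ are needed, which is precisely what makes the lifting machinery give new examples beyond the $p$-adic curve case of \citet{brussel2009indecomposable}.
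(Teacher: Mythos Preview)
Your proof is correct and follows essentially the same approach as the paper: lift $\hat{\gamma}$ via $s$, use Corollary~\ref{cor:preserve-index} for the index, the section property for the exponent, and push a hypothetical decomposition down along $\res_{\hat{F}}$ to contradict indecomposability over $\hat{F}$. Your write-up is in fact more careful than the paper's in justifying why the restricted factors remain nontrivial (you spell out the divisibility chain $q^{i}=\ind(\hat{\gamma})\mid\ind(\res\gamma_{1})\ind(\res\gamma_{2})\mid q^{a_{1}}q^{a_{2}}=q^{i}$), whereas the paper simply asserts the equality $\ind(\hat{\gamma})=\ind(\res\beta_{1})\cdot\ind(\res\beta_{2})$ in one line.
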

\begin{proof}
By Proposition \ref{pro:ind-completion}, there exists $\hat{\gamma}\in\br(\hat{F})$
with $\ind(\hat{\gamma})=q^{i}$ and $\exp(\hat{\gamma})=q^{e}$ and
whose underlying division algebra is indecomposable. By Corollary
\prettyref{cor:preserve-index}, $\gamma=s(\hat{\gamma})$ has index
$q^{i}$ too. Since $s$ splits the restriction map, we have $\exp(\gamma)=q^{e}$.
We show the division algebra underlying $\gamma$ is indecomposable.

We proceed by contradiction. Assume $\gamma=\beta_{1}+\beta_{2}$
represents a nontrivial decomposition, then $\hat{\gamma}=\res_{\hat{F}}(\beta_{1})+\res_{\hat{F}}(\beta_{2})$.
Since the index can only go down under restriction, we have that $\ind(\hat{\gamma})=\ind(\res_{\hat{F}}(\beta_{1}))\cdot\ind(\res_{\hat{F}}(\beta_{2}))$,
which represents a nontrivial decomposition of the division algebra
underlying $\hat{\gamma}$, a contradiction. 
\end{proof}

\subsection{Noncrossed Products over $F$ }

Again we will construct noncrossed product division algebras over
$\hat{F}$ and use the splitting map $s$ to lift it to $F$ and show
that the lift represents a noncrossed product division algebra over
$F$. 

The construction over $\hat{F}$ is in line with \citet{brussel1995noncrossed}
where noncrossed products over $Q(t)$ and $Q((t))$ are constructed.
In order to mimic the construction in \citet{brussel1995noncrossed},
we need only note that both $ $Chebotarev density theorem and the
Gruwald-Wang theorem hold for global fields which are characteristic
$p$ function fields. Then the arguments in \citet{brussel1995noncrossed}
apply directly to yield noncrossed products over $\hat{K}(\hat{X})$
of index and exponent given below:

The following is \citet[Theorem 4.7]{brussel2009indecomposable}.
\begin{thm}
\label{thm:nc-over-compl}Let $T$ be a complete discrete valuation
ring with residue field $k$ and let $\hat{X}$ be a smooth projective
curve over $\spec(T)$. Let $F$ be the function field of $\hat{X}$
and let $\hat{F}$ be the completion of $F$ with respect to the discrete
valuation induced by the closed fibre. For any positive integer $a$,
let $\epsilon_{a}$ be a primitive $a$-th root of unity. Set $r$
and $s$ to be maximum integers such that $\mu_{q^{r}}\subset k(X)^{\times}$
and $\mu_{q^{s}}\subset k(X)(\epsilon_{q^{r+1}})$. Let $n,m$ be
integers such that $n\ge1,n\ge m$ and $n,m\in{r}\cup[s,\infty)$.
Let $a,l$ be integers such that $l\ge n+m+1$ and $0\le a\le1-n$.
(See \citet[Page 384-385]{brussel1995noncrossed} for more information
regarding these constraints.) Let $q\ne\chara(k)$ be a prime number.
Then there exists noncrossed product division algebras over $\hat{F}$
with index $q^{l+1}$ and exponent $q^{l}$. \end{thm}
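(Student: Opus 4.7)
The plan is to carry out the noncrossed product construction of \citet{brussel1995noncrossed}, originally performed over $\mathbb{Q}((t))$, with $\hat{F}$ playing the role of $\mathbb{Q}((t))$ and the residue field $k(X)$ playing the role of $\mathbb{Q}$. The justification for this substitution, as already noted in the paragraph preceding the theorem, is that the only nonformal arithmetic inputs in \citet{brussel1995noncrossed} are Chebotarev density and Grunwald--Wang, both of which hold for $k(X)$ as a characteristic-$p$ global function field; the remainder of Brussel's argument is a formal analysis of Galois groups and ramification over a complete discretely valued field, which then transfers directly.

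Concretely, I would first use Grunwald--Wang over $k(X)$ to construct a character $\chi_{0}\in\H^{1}(k(X))$ of order $q^{l}$ with prescribed local behavior at a carefully chosen finite set of places, chosen to encode the Brussel parameters $n,m,r,s,a$. In parallel, I would use Chebotarev density to produce a Brauer class $\gamma_{0}\in\H^{2}(k(X))$ of index $q^{l}$ with matching local invariants at those places. Setting $\hat{\gamma} = \gamma_{0} + (\chi_{0},t) \in \H^{2}(\hat{F})$ via the Witt splitting in \eqref{eq:witt-exact}, the index-exponent formula for Brauer classes over a complete discretely valued field (Jacob--Wadsworth, already used in Section~\ref{sec:Index-Computation}) then yields $\ind(\hat{\gamma}) = q^{l+1}$ and $\exp(\hat{\gamma}) = q^{l}$.

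For the noncrossed product property I would argue by contradiction. If the underlying division algebra $D$ were a crossed product, it would contain a Galois maximal subfield $E/\hat{F}$ of degree $q^{l+1}$. Since $\hat{F}$ is complete discretely valued, $E/\hat{F}$ decomposes as an unramified extension (corresponding to a Galois extension $\bar{E}/k(X)$) followed by a totally ramified extension, so $\mathrm{Gal}(E/\hat{F})$ acquires a constrained semidirect-product shape. The hypotheses $\mu_{q^{r}}\subset k(X)^{\times}$ and $\mu_{q^{s}}\subset k(X)(\epsilon_{q^{r+1}})$, together with $l\ge n+m+1$ and $n,m\in\{r\}\cup[s,\infty)$, then translate, via Chebotarev and Grunwald--Wang applied to $k(X)$, into the nonexistence of such a Galois extension $\bar{E}/k(X)$ with the required local decomposition data, exactly as in the corresponding analysis of \citet{brussel1995noncrossed}; this contradicts the existence of $E$.

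The hard part will be this last step: verifying that Brussel's intricate Galois/ramification bookkeeping over $\mathbb{Q}$ transcribes verbatim to the characteristic-$p$ global function field $k(X)$. This reduces to confirming that the only arithmetic inputs Brussel uses (Chebotarev, Grunwald--Wang, and cyclotomic control of roots of unity, the last of which is precisely why the hypotheses on $r$ and $s$ appear) are all available over $k(X)$; once this is in place, the group-theoretic and ramification-theoretic manipulations are formal and yield the desired noncrossed product conclusion.
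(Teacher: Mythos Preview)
Your proposal is correct and follows essentially the same approach as the paper: the paper's own argument for this theorem is simply the sentence preceding it, namely that Brussel's construction over $\mathbb{Q}((t))$ carries over to $\hat{F}$ because Chebotarev density and Grunwald--Wang are available for the global function field $k(X)$, and you have spelled out the mechanics of that transfer (Witt decomposition $\hat\gamma=\gamma_0+(\chi_0,t)$, index via Jacob--Wadsworth, contradiction on a Galois maximal subfield) in more detail than the paper itself does. Both your write-up and the paper implicitly rely on $k(X)$ being a global field, i.e.\ on $k$ being finite, which is the setting in which Chebotarev and Grunwald--Wang apply.
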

\begin{cor}
Let $R,k,\hat{X},X,F,\hat{F},q,a,l$ be as in Theorem \ref{thm:nc-over-compl}.
Then there exists noncrossed product division algebras over $F$ of
index $q^{l+a}$ and exponent $q^{l}$. \end{cor}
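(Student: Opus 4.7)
The plan is to lift the noncrossed product division algebra from $\hat{F}$ provided by Theorem \ref{thm:nc-over-compl} to $F$ via the splitting map $s$ of Section \ref{sec:Splitting-Map}, exactly mirroring the proof in the indecomposable case. First, I would apply Theorem \ref{thm:nc-over-compl} to obtain a class $\hat{\gamma}\in\H^{2}(\hat{F})$ whose underlying division algebra is a noncrossed product of the prescribed index $q^{l+a}$ and exponent $q^{l}$, and set $\gamma:=s(\hat{\gamma})\in\H^{2}(F)$. By Corollary \ref{cor:preserve-index} the splitting map preserves index, so $\ind(\gamma)=q^{l+a}$. Since $s$ is a group homomorphism and a section of $\res_{\hat{F}}$ by Proposition \ref{pro:s-is-section}, the exponent of $\gamma$ is sandwiched: $\exp(\gamma)\mid\exp(\hat{\gamma})$ because $n\gamma=s(n\hat{\gamma})$, while $\exp(\hat{\gamma})=\exp(\res_{\hat{F}}(\gamma))\mid\exp(\gamma)$; hence $\exp(\gamma)=q^{l}$ as desired.

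The substantive step is to prove that the division algebra $D$ underlying $\gamma$ is noncrossed, and I would argue by contradiction. Suppose $D$ is a crossed product, so it contains a Galois maximal subfield $L/F$ with $[L:F]=\ind(D)=q^{l+a}$. Tensoring the inclusion $L\hookrightarrow D$ with $\hat{F}$ over $F$ embeds the étale $\hat{F}$-algebra $L\otimes_{F}\hat{F}$ into $D\otimes_{F}\hat{F}$. Because $s$ is a section of $\res_{\hat{F}}$ and preserves index, we have $\ind(D\otimes_{F}\hat{F})=\ind(\hat{\gamma})=q^{l+a}=\ind(D)$, so $D\otimes_{F}\hat{F}$ is itself a division algebra. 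A commutative subalgebra of a division algebra is an integral domain, and being finite-dimensional over $\hat{F}$ it must be a field; thus $L\otimes_{F}\hat{F}$ is a field. Since $L/F$ is separable and Galois, the compositum $L\hat{F}=L\otimes_{F}\hat{F}$ is Galois over $\hat{F}$ of degree $[L:F]=\ind(D\otimes_{F}\hat{F})$, exhibiting $D\otimes_{F}\hat{F}$ as a crossed product. This contradicts the choice of $\hat{\gamma}$ as a noncrossed product over $\hat{F}$.

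The critical ingredient, and the only nonformal step, is the index-preservation property from Corollary \ref{cor:preserve-index}: without it, $D\otimes_{F}\hat{F}$ need not be a division algebra, and the étale algebra $L\otimes_{F}\hat{F}$ could decompose as a nontrivial product of fields, so each component would give only a proper Galois subfield of $D\otimes_{F}\hat{F}$ rather than a Galois maximal subfield, and the crossed-product structure would not descend to $\hat{F}$. Once Corollary \ref{cor:preserve-index} is available, the descent of the crossed-product property is forced by the fact that commutative subalgebras of division algebras are fields, and the argument closes cleanly.
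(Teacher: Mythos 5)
Your proof is correct and follows essentially the same route as the paper: lift $\hat{\gamma}$ via $s$, invoke Corollary~\ref{cor:preserve-index} for the index, and argue by contradiction that a Galois maximal subfield of $D$ would produce one for $D\otimes_F\hat{F}$. In fact you supply a justification the paper glosses over: the paper simply asserts that $M\hat{F}$ has degree $\ind(\hat{\gamma})$ over $\hat{F}$, whereas you correctly observe that because $\ind(D\otimes_F\hat{F})=\ind(D)=\deg(D)$, the algebra $D\otimes_F\hat{F}$ is itself a division algebra, so its commutative subalgebra $L\otimes_F\hat{F}$ is forced to be a field of the full degree $[L:F]$, giving the Galois maximal subfield of $D\otimes_F\hat{F}$ needed for the contradiction. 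You also make explicit the exponent computation (sandwiching $\exp(\gamma)$ between $\exp(\hat{\gamma})$ and itself using that $s$ is a homomorphism and a section), which the paper omits; both additions are welcome and the argument is sound.
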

\begin{proof}
Let $\hat{\gamma}$ be the Brauer class representing a noncrossed
product over $\hat{F}$ of index $q^{l+a}$ and exponent $q^{l}$.
Let $D$ be the division algebra underlying the Brauer class $s(\hat{\gamma})$.
By Corollary \ref{cor:preserve-index}, we know that $\ind(D)=\ind(\hat{\gamma})$. 

Assume that $D$ is a crossed product with maximal Galois subfield
$M/F$. Then $M\hat{F}$ splits $\hat{\gamma}$, is of degree $\ind(\hat{\gamma})$
and is Galois. This contradicts the fact that $\hat{\gamma}$ is a
noncrossed product. 
\end{proof}

\section*{References}

\bibliographystyle{plainnat}
\bibliography{lytez}

\end{document}